\documentclass[12pt,letterpaper,twoside]{article}

\usepackage{amsmath,amssymb}
\usepackage{amsthm}
\usepackage{stmaryrd}
\usepackage{fancyhdr}
\usepackage{times}
\usepackage{mathrsfs} 
\usepackage{titlesec}
\usepackage[normalem]{ulem}
\usepackage[pdftex,dvips]{geometry}

\geometry{paper=letterpaper}
\geometry{twosideshift=0cm}
\geometry{headheight=16pt,headsep=17pt}
\geometry{textheight=22cm,textwidth=16.5cm}

\hyphenation{to-po-lo-gies}
\hyphenation{mo-no-mor-phism}
\hyphenation{met-riz-a-ble}
\hyphenation{u-sing}
\hyphenation{pseudo-char-ac-ter}
\hyphenation{homo-mor-phism}
\hyphenation{a-be-li-an}
\hyphenation{Ste-phen-son}
\hyphenation{Lem-ma}
\hyphenation{pseu-do-char-ac-ter}

\frenchspacing


\fancyhf{}
\pagestyle{fancy}

\bibliographystyle{abbrv-lukacs}

\newtheoremstyle{fact}
     {\topsep}
     {\topsep}
     {\slshape}
     {}
     {\bfseries}
     {}
     { }
     {\thmname{#1}\thmnumber{ #2.}\thmnote{ \rm (#3)}}

\newtheoremstyle{mylabel}
     {\topsep}
     {\topsep}
     {\itshape}
     {}
     {\bfseries}
     {}
     { }
     {\thmname{#1}\thmnote{ #3}.}

\newtheorem{theorem}{Theorem}[section]
\newtheorem{Ltheorem}{Theorem}

\newtheorem*{theorem*}{Theorem} 
\newtheorem{lemma}[theorem]{Lemma}
\newtheorem{proposition}[theorem]{Proposition}
\newtheorem{corollary}[theorem]{Corollary}

\theoremstyle{mylabel}
\newtheorem*{Ltheorem*}{Theorem}

\theoremstyle{definition}
\newtheorem{definition}[theorem]{Definition}
\newtheorem{remark}[theorem]{Remark}

\newtheorem*{remark*}{Remark}

\newtheorem*{question*}{Question}
\newtheorem*{examples*}{Examples}  
\newtheorem{example}[theorem]{Example}
\newtheorem{examples}[theorem]{Examples}
\newtheorem*{example*}{Example}

\newtheorem*{convention*}{Convention}

\theoremstyle{fact}

\newtheorem{ftheorem}[theorem]{Theorem}
\newtheorem{flemma}[theorem]{Lemma}

\newenvironment{myromanlist}[1][enumi]{\begin{list}{{\rm (\roman{#1})}}
{\usecounter{#1}\setlength{\labelwidth}{25pt}\setlength{\topsep}{-6pt}
\setlength{\itemsep}{-4pt} \setlength{\leftmargin}{25pt}}}{\end{list}}

\newenvironment{myalphlist}[1][enumi]{\begin{list}{{\rm (\alph{#1})}}
{\usecounter{#1}\setlength{\labelwidth}{25pt}\setlength{\topsep}{-6pt}
\setlength{\itemsep}{-4pt} \setlength{\leftmargin}{25pt}}}{\end{list}}

\newenvironment{mynumlist}[1][enumii]{\begin{list}{{\rm (\arabic{#1})}}
{\usecounter{#1}\setlength{\labelwidth}{25pt}\setlength{\topsep}{-6pt}
\setlength{\itemsep}{-4pt} \setlength{\leftmargin}{25pt}}}{\end{list}}

\def\proofont{\fontseries{bx}\fontshape{sc}\selectfont}
\def\proofname{Proof.}

\newcommand{\pcite}[2]{{\cite[#1]{#2}}}

\newcommand{\Note}[1]{}

\makeatletter
\renewenvironment{proof}[1][\proofname]{\par
  \normalfont
  \topsep6\p@\@plus6\p@ \trivlist
  \item[\hskip\labelsep\noindent\proofont #1]\ignorespaces
}{%
  \qed\endtrivlist
}
\makeatother

%

\titlelabel{\thetitle.\ }
\titleformat*{\section}{\normalsize\bfseries\centering}
\titleformat*{\subsection}{\normalsize\bfseries\itshape}

\author{D. Dikranjan\thanks{The first author acknowledges the financial 
aid received from SRA, grants P1-0292-0101 and J1-9643-0101.} { }and 
G\'abor Luk\'acs\thanks{The second author gratefully acknowledges the 
generous financial support received from NSERC and the University of 
Manitoba, which enabled him to do this research.}}

\title{On zero-dimensionality and the connected component\\
of locally pseudocompact groups
\thanks{2010 Mathematics Subject Classification: Primary 22A05, 54D25, 54H11; 
Secondary 22D05, 54D05, 54D30}}

\hyphenation{to-po-lo-gies}
\hyphenation{mo-no-mor-phism}
\hyphenation{me-tri-za-ble}

\begin{document}

\makeatletter
\def\@fnsymbol#1{\ifcase#1\or * \or 1 \or 2  \else\@ctrerr\fi\relax}

\let\mytitle\@title
\makeatother

\chead{{\fontfamily{ppl}\fontsize{14}{14}\selectfont\itshape
Dedicated to Wis Comfort on the occassion of his 77th 
birthday}}

\maketitle

\def\thanks#1{} 

\thispagestyle{fancy}



\begin{abstract} 
A topological group is {\em locally pseudocompact} if it contains a 
non-empty open set with pseudocompact closure. In this note, we prove that 
if $G$ is a group with the property that every closed subgroup of $G$ is 
locally pseudocompact, then $G_0$ is dense in the 
component of the completion of $G\hspace{-1pt}$, and 
$G/G_0$ is zero-dimensional. We also provide examples of hereditarily 
disconnected pseudocompact groups with strong minimality properties of 
arbitrarily large dimension, and thus show that $G/G_0$ may fail to be 
zero-dimensional even for totally minimal pseudocompact groups.
\end{abstract}

\section{Introduction}

A Tychonoff space is {\itshape zero-dimensional} if it has a base 
consisting of {\itshape clopen} (open-and-closed)  sets.
With each topological group $G$ are associated functorial subgroups 
related to connectedness properties of $G\hspace{-1.5pt}$, defined as 
follows (cf.~\cite[1.1.1]{Dikconcomp}):
\begin{myalphlist}

\item
$G_0$ denotes the connected component of the identity;

\item
$q(G)$ denotes the {\em quasi-component} of the identity, that is, the 
intersection of all clopen sets containing the identity;

\item
$z(G)$ denotes the 
intersection of all kernels of continuous homomorphisms 
from $G$ into zero-dimensional groups;

\item
\mbox{$o(G)$} denotes the 
intersection of all open subgroups of $G\hspace{-1.5pt}$.

\vskip 1pt

\end{myalphlist}
It is well known  that these subgroups are closed and normal
(cf.~\cite[7.1]{HewRos}, \cite[2.2]{DikCOTA}, and~\cite[1.32(b)]{GLCLTG}).
Clearly, \mbox{$G_0\hspace{-2pt}\subseteq \hspace{-2pt} q(G)
\hspace{-2pt} \subseteq \hspace{-2pt} z(G)
\hspace{-2pt} \subseteq \hspace{-2pt}o(G)$}, and all four are equal 
for locally compact groups.

\begin{ftheorem}[\pcite{7.7, 7.8}{HewRos}] \label{intro:thm:LC}
Let $L$ be a locally compact group. Then 
$L/L_0$ is zero-dimensional and
\mbox{$L_0\hspace{-2pt} = \hspace{-2pt} q(L) 
\hspace{-2pt} = \hspace{-2pt} z(L)
\hspace{-2pt} = \hspace{-2pt}o(L)$}.
\end{ftheorem}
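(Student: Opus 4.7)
The plan is to take van Dantzig's theorem as the main engine: \emph{every totally disconnected locally compact group admits a neighborhood base at the identity consisting of compact open subgroups}. Granting this, the entire theorem becomes a short formal consequence. First I would reduce the problem to understanding the quotient $L/L_0$, then apply van Dantzig to $L/L_0$, and finally squeeze the four functorial subgroups together using the chain $L_0\subseteq q(L)\subseteq z(L)\subseteq o(L)$ already noted in the excerpt.

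The first step is to verify that $L/L_0$ is totally disconnected. Let $\pi\colon L\to L/L_0$ be the quotient map, let $C$ denote the connected component of the identity in $L/L_0$, and set $H:=\pi^{-1}(C)$. Then $H$ is a closed subgroup of $L$ containing $L_0$, and $H/L_0\cong C$ is connected. One now uses the standard extension principle for connectedness: if a topological group has a closed connected normal subgroup with connected quotient, it is itself connected. (Indeed, any clopen partition $H=A\sqcup B$ would have to be saturated with respect to $L_0$, since each coset $xL_0$ is connected; then $\pi$ being an open surjection would descend the partition to a clopen partition of $C$.) Hence $H$ is connected, which forces $H\subseteq L_0$, and so $C=\{eL_0\}$.

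The second step applies van Dantzig to the now-known totally disconnected locally compact group $L/L_0$. This yields a neighborhood base at the identity consisting of compact open subgroups. Such subgroups are clopen, so by homogeneity $L/L_0$ is zero-dimensional, which already gives one of the two conclusions. Moreover, these subgroups intersect in $\{eL_0\}$, so the intersection of \emph{all} open subgroups of $L/L_0$ is trivial.

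The third step handles the equality of the four subgroups. Since the chain $L_0\subseteq q(L)\subseteq z(L)\subseteq o(L)$ is noted in the excerpt, it suffices to prove $o(L)\subseteq L_0$. For every open subgroup $V\subseteq L/L_0$, the preimage $\pi^{-1}(V)$ is an open subgroup of $L$ containing $L_0$, so $o(L)\subseteq\pi^{-1}(V)$. Intersecting over all open subgroups of $L/L_0$ and using the previous step gives $\pi(o(L))=\{eL_0\}$, that is, $o(L)\subseteq L_0$, as required. The only genuine obstacle in the whole scheme is van Dantzig's theorem itself; once it is invoked, the rest is formal manipulation of quotients.
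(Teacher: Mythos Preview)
The paper does not prove this statement at all: it is quoted as a known fact from Hewitt--Ross \cite[7.7, 7.8]{HewRos}, so there is no ``paper's own proof'' to compare against. Your argument via van Dantzig's theorem is correct and is essentially the standard route to this result; the reduction to $L/L_0$, the application of van Dantzig to obtain a base of compact open subgroups, and the squeeze $o(L)\subseteq L_0$ are exactly how the classical proof runs.

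One terminological wrinkle worth noting in the context of this paper: in your first step you announce that you will show $L/L_0$ is \emph{totally disconnected}, but what you actually prove is that its connected component is trivial, which in the paper's language is \emph{hereditarily disconnected}. This is harmless---van Dantzig's theorem is stated precisely for locally compact groups with trivial identity component, and its conclusion then yields zero-dimensionality (hence total disconnectedness) for free---but since the paper carefully distinguishes the two notions, you may want to phrase Step~1 as establishing hereditary disconnectedness and let van Dantzig upgrade it.
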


The aim of the present paper is to investigate to what extent the 
condition of local compactness can be relaxed in Theorem~\ref{intro:thm:LC}.
Although Theorem~\ref{intro:thm:LC} might appear as a result about 
connectedness,  it has far more to do with different 
degrees of {\itshape disconnectedness}. Recall that
a space $X$ is {\itshape hereditarily disconnected} if its 
connected components are singletons, and $X$ is {\itshape totally 
disconnected} if its quasi-components are singletons. Clearly, 
\begin{align*}
\mbox{zero-dimensional} 
\stackrel{(*)}{\Longrightarrow}
\mbox{totally disconnected}
\stackrel{(**)}{\Longrightarrow}  
\mbox{hereditarily disconnected},
\end{align*}
and by Vedenissoff's classic theorem, both implications are 
reversible for locally compact (Hausdorff) spaces, that is, the three 
properties are equivalent for such spaces (cf.~\cite{Vedenissoff}).

\chead{\small\itshape D. Dikranjan and G. Luk\'acs  / Locally pseudocompact groups}
\fancyhead[RO,LE]{\small \thepage}

It is well known that the quotient $G/G_0$ is hereditarily disconnected 
for every topological group~$G$ (cf.~\cite[7.3]{HewRos} 
and~\cite[1.32(c)]{GLCLTG}). Thus, if the implications ($*$) and ($**$) 
are reversible for $G/G_0$, then $G/G_0$ is zero-dimensional, and so
\mbox{$G_0\hspace{-2pt} = \hspace{-2pt} q(G)
\hspace{-2pt} = \hspace{-2pt} z(G)$}. If in addition
\mbox{$z(G)\hspace{-2pt} = \hspace{-2pt} o(G)$}, then
Theorem~\ref{intro:thm:LC} holds for $G\hspace{-1pt}$. This phenomenon 
warrants introducing some terminology.

\begin{definition}
A topological group $G$ is {\itshape Vedenissoff} if the quotient
$G/G_0$ is zero-dimensional; if in addition 
\mbox{$z(G)\hspace{-2pt} = \hspace{-2pt} o(G)$}, then we say that $G$ is
{\itshape strongly Vedenissoff}.
\end{definition}

Our goal is to identify classes of (strongly) Vedenissoff groups, and to 
find examples of non-Vedenissoff groups that have many compactness-like 
properties. The latter will demonstrate how close a group must be to 
being locally compact (or compact) in order to be Vedenissoff. (Not every 
Vedenissoff group is strongly Vedenissoff. Indeed, $\mathbb{Q}/\mathbb{Z}$ 
is zero-dimensional, but has no proper open subgroups, and so
\mbox{$z(\mathbb{Q}/\mathbb{Z})\hspace{-2pt} \neq
\hspace{-2pt} o(\mathbb{Q}/\mathbb{Z})$}. However, thanks to 
Theorem~\ref{prel:thm:connsum}(a) below, these two notions coincide in 
the class of groups that are considered in this paper.)

\bigskip

A Tychonoff space $X$ is {\em pseudocompact} if every 
continuous real-valued map on $X$ is bounded. A~topological group $G$ is 
{\em locally pseudocompact} if there is a neighborhood $U$ of the identity 
such that $\operatorname{cl}_G U$ is pseudocompact. (Clearly, every 
metrizable locally pseudocompact group is locally compact.)
We say that $G$ is 
{\itshape hereditarily} [{\itshape locally}]
{\itshape pseudocompact} if every closed subgroup of $G$ is 
[locally] pseudocompact. (Note that the adjective {\itshape hereditary} 
applies only to closed subgroups here, and not to all subgroups. Indeed, 
by Corollary~\ref{prel:cor:subLPS} below, if every subgroup of 
a~topological group is locally pseudocompact, then  the group is 
discrete, which is of no interest for the present paper.)

More than fifteen years ago, Dikranjan showed that hereditarily 
pseudocompact groups are strongly Vedenissoff (cf.~\cite[1.2]{DikPS0dim}). 
We obtain in this paper a theorem that simultaneously generalizes 
Theorem~\ref{intro:thm:LC}, this result of D.D., and
provides a positive solution to a problem posed by 
Comfort and Luk\'acs (cf.~\cite[4.17]{ComfGL}).

\begin{Ltheorem} \label{thm:main:herLPS}
Let $G$ be a hereditarily locally pseudocompact group. Then 
$G\hspace{-0.5pt}/G_0$ is 
zero-dimensional and \mbox{$G_0\hspace{-2pt} = \hspace{-2pt} q(G)
\hspace{-2pt} = \hspace{-2pt} z(G)\hspace{-2pt} = \hspace{-2pt}o(G)$}, 
that is, $G$ is strongly Vedenissoff.
\end{Ltheorem}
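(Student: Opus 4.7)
The plan is to pass to the Raikov completion $\tilde G$ of $G$ and leverage Theorem~\ref{intro:thm:LC}. Since $G$ is itself locally pseudocompact (it is a closed subgroup of itself), a preliminary fact—presumably recorded earlier in the paper—yields that $\tilde G$ is locally compact. Theorem~\ref{intro:thm:LC} then gives $\tilde G_0 = q(\tilde G) = z(\tilde G) = o(\tilde G)$ and zero-dimensionality of $\tilde G/\tilde G_0$. In particular, the identity of $\tilde G/\tilde G_0$ admits a neighborhood base consisting of clopen subgroups.

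The crux of the argument is the identity $G_0 = G \cap \tilde G_0$. The inclusion $G_0 \subseteq G \cap \tilde G_0$ is immediate, since $G_0$ is a connected subset of $\tilde G$ containing the identity. For the reverse inclusion I would set $H := G \cap \tilde G_0$, a closed subgroup of $G$, and hence locally pseudocompact by hypothesis. Its Raikov completion $\overline{H}^{\tilde G}$ is then a closed locally compact subgroup of $\tilde G_0$. One must verify that $H$ is in fact dense in $\tilde G_0$, and (with that density established) invoke the principle that a dense locally pseudocompact subgroup of a connected locally compact group is itself connected. This forces $H \subseteq G_0$, hence equality.

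With $G_0 = G \cap \tilde G_0$ in hand, the remaining conclusions are formal pullbacks. For each clopen subgroup $U$ of $\tilde G/\tilde G_0$, its preimage $V$ in $\tilde G$ is a clopen subgroup containing $\tilde G_0$; hence $V \cap G$ is a clopen subgroup of $G$ containing $G_0$, and its image in $G/G_0$ is a clopen neighborhood of the identity. Letting $U$ range over a neighborhood base of clopen subgroups of $\tilde G/\tilde G_0$ exhibits a base of clopen sets at the identity of $G/G_0$, which is therefore zero-dimensional. The same device applied to a point $g \in G \setminus G_0$—separated from the identity in $\tilde G/\tilde G_0$ by a clopen subgroup, using local compactness and zero-dimensionality of the latter—yields an open subgroup of $G$ not containing $g$, giving $o(G) \subseteq G_0$. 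Combined with the standing sandwich $G_0 \subseteq q(G) \subseteq z(G) \subseteq o(G)$, strong Vedenissoff follows.

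The main obstacle is the two-step verification that $H$ equals $G_0$: density of $H$ in $\tilde G_0$, and the transfer of connectedness from $\tilde G_0$ to $H$. Density is the more delicate half, because an arbitrary dense subgroup of a locally compact group need not meet the connected component densely; so hereditary local pseudocompactness must be invoked in an essential way here, presumably through a preliminary (a candidate being Theorem~\ref{prel:thm:connsum}) that couples local pseudocompactness with the connected component structure of the completion. The connectedness transfer, by contrast, is of the same flavor as the classical result that pseudocompactness forces a dense subgroup of a connected compact group to be connected, and I would expect it to be already available in the paper's preliminaries.
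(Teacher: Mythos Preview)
Your formal reductions are fine: once $G/G_0$ is zero-dimensional, the equalities $G_0 = q(G) = z(G) = o(G)$ follow from Theorem~\ref{prel:thm:connsum}(a), and your connectedness transfer also works (if $H$ is locally pseudocompact with $\widetilde H$ connected, Theorem~\ref{prel:thm:connsum}(a) gives $q(H) = (\widetilde H)_0 \cap H = H$, which forces $H$ connected). The genuine gap is your step (a), density of $H = G \cap (\widetilde G)_0$ in $(\widetilde G)_0$. You flag it as delicate but offer no argument beyond gesturing at Theorem~\ref{prel:thm:connsum}; however, part (c) of that theorem says this density is \emph{equivalent} to $G/G_0$ being zero-dimensional, so step (a) \emph{is} the theorem. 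The Comfort--van~Mill groups of Example~\ref{intro:ex:CvM} show that density can fail for merely pseudocompact $G$, so hereditary local pseudocompactness must enter substantively, and you have not indicated how.

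The paper's proof is considerably more hands-on. After reducing to hereditarily disconnected $G$ (by passing to $G/G_0$), it proves the contrapositive: if $(\widetilde G)_0 \neq \{e\}$ then $G_0 \neq \{e\}$. Iwasawa's structure theorem (Theorem~\ref{conn:thm:Iwasawa}) splits this into two cases. If $\widetilde G$ contains a nontrivial compact connected subgroup, one intersects $G$ with a suitable compact $G_\delta$-subgroup of $\widetilde G$ to obtain a hereditarily \emph{pseudocompact} subgroup, and invokes Dikranjan's earlier result (Theorem~\ref{herLPS:thm:herpscp}). Otherwise $(\widetilde G)_0$ contains a closed copy of $\mathbb R$, and one uses the metrizability of topologically finitely generated profinite groups (Theorem~\ref{herLPS:thm:fg}) to force a certain closed subgroup of $G$ to be locally compact, and hence to contain that copy of $\mathbb R$ outright. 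None of this machinery is visible in your outline.
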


The next example shows that the condition of hereditarily local 
pseudocompactness in Theorem~\ref{thm:main:herLPS} cannot be replaced with 
(local) pseudocompactness.

\begin{example} \label{intro:ex:CvM}
Comfort and van Mill showed that for  every natural number $n$ there 
exists an abelian pseudocompact group $G_n$ such 
that $G_n$ is totally disconnected, but 
\mbox{$\dim G_n \hspace{-2pt} = \hspace{-1pt} n$}
(cf.~\cite[7.7]{ComfvMill2}). In particular, the converse of the 
implication ($*$) may fail
for these groups~$G_n$, and they are not Vedenissoff. This shows that 
pseudocompact groups need not be Vedenissoff.
\end{example}

Although pseudocompactness alone is too weak  a property to imply that 
the group is Vedenissoff, it turns out that it is sufficient in the 
presence of some additional compactness-like properties. Recall that a 
(Hausdorff) topological group $G$ is {\itshape minimal} if there is no 
coarser (Hausdorff) group topology (cf.~\cite{Steph} and~\cite{Doi}), and 
$G$ is {\itshape totally minimal} if every (Hausdorff) quotient of $G$ is 
minimal (cf.~\cite{DikPro}). Equivalently, $G$ is totally minimal if every 
continuous surjective homomorphism \mbox{$G\rightarrow H$} is open.

An unpublished result of Shakhmatov states that the converse of ($*$) 
holds for minimal pseudocompact groups. Specifically, Shakhmatov proved 
that every pseudocompact  totally disconnected group admits a coarser 
zero-dimensional group topology, and thus minimal pseudocompact totally 
disconnected groups are zero-dimensional (cf.~\cite[1.6]{DikPS0dim}). We 
prove a generalization of Shakhmatov's result:

\begin{Ltheorem}\mbox{ }

\label{thm:main:coarser}

\begin{myalphlist}

\item
Every locally pseudocompact totally disconnected group admits a coarser
zero-dimensional group topology.

\item
Every minimal, locally pseudocompact, totally disconnected group is 
zero-dimensional, and thus strongly Vedenissoff.

\end{myalphlist}
\end{Ltheorem}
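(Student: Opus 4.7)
Part (b) is a corollary of (a) via minimality: the coarser Hausdorff zero-dimensional group topology furnished by (a) must coincide with the original one, so $G$ itself is zero-dimensional.  Then $G_0 = q(G) = \{e\}$, and $z(G) = \{e\}$ since the identity map $\mathrm{id}_G$ witnesses $G$ as its own zero-dimensional quotient; the remaining equality $o(G) = \{e\}$ follows by applying van Dantzig's theorem in $\widetilde{G}$ (locally compact and zero-dimensional, since dimension is preserved under completion in this setting) and pulling back a base of open compact subgroups of $\widetilde{G}$ to $G$.

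The strategy for (a) is to produce a continuous injective homomorphism $\varphi\colon G\to Z$ into a zero-dimensional Hausdorff topological group: the initial topology on $G$ along $\varphi$ will then be a coarser zero-dimensional Hausdorff group topology.  The natural candidate is
\[
\varphi\colon\, G\ \hookrightarrow\ \widetilde{G}\ \twoheadrightarrow\ \widetilde{G}/(\widetilde{G})_0,
\]
with $\widetilde{G}$ the Raikov completion of $G$.  Local pseudocompactness of $G$ makes $\widetilde{G}$ locally compact (the $\widetilde{G}$-closure of a pseudocompact subset of $G$ is compact), so Theorem~\ref{intro:thm:LC} renders $\widetilde{G}/(\widetilde{G})_0$ locally compact and zero-dimensional.

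The heart of the argument is injectivity of $\varphi$, i.e., $G\cap(\widetilde{G})_0 = \{e\}$.  Fix $g\in G\setminus\{e\}$.  By total disconnectedness of $G$ there is a clopen $D\subseteq G$ with $e\in D$ and $g\notin D$.  Pick an open neighborhood $V$ of $e$ with $\overline{V}^G$ pseudocompact, and replace $V$ by $V\cup gV$ so that the new $V$ still has pseudocompact closure (a union of two pseudocompact sets is pseudocompact) and additionally contains~$g$.  Setting $A := V\cap D$ and $B := V\setminus D$ yields open disjoint subsets of $G$ with $A\cup B = V$, $e\in A$, $g\in B$, and $\overline{A\cup B}^G = \overline{V}^G$ pseudocompact.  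A locally pseudocompact analogue of the classical Comfort--Ross disjoint-closures lemma then yields $\overline{A}^{\widetilde{G}}\cap\overline{B}^{\widetilde{G}} = \emptyset$, so $\overline{A}^{\widetilde{G}}$ is a \emph{relatively} clopen subset of the compact neighborhood $\overline{V}^{\widetilde{G}}$ of $e$ in $\widetilde{G}$ containing $e$ but not $g$.

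The main obstacle is to promote this \emph{local} clopen separation inside $\overline{V}^{\widetilde{G}}$ to the \emph{global} statement $g\notin(\widetilde{G})_0$.  Components and quasi-components coincide in the compact space $\overline{V}^{\widetilde{G}}$, so the component $C$ of $e$ there is contained in $\overline{A}^{\widetilde{G}}$ and, as a connected subset of $\widetilde{G}$, sits inside $(\widetilde{G})_0$.  It remains to show -- by a judicious choice of $V$ combined with exploitation of the group structure of the connected locally compact group $(\widetilde{G})_0$, perhaps via open-subgroup arguments in the zero-dimensional locally compact quotient $\widetilde{G}/(\widetilde{G})_0$ -- that $C$ captures the portion of $(\widetilde{G})_0$ that matters for $g$, so that any prospective $g\in(\widetilde{G})_0\cap\overline{V}^{\widetilde{G}}$ is forced into $C\subseteq\overline{A}^{\widetilde{G}}$, contradicting $g\in\overline{B}^{\widetilde{G}}$.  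This upgrade from local to global clopen separation is the technical crux of the proof.
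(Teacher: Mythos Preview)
Your proposal contains an explicit, acknowledged gap at exactly the point where the paper's proof is a one-liner. The paper does not argue via $\widetilde G/(\widetilde G)_0$ at all: it simply invokes Theorem~\ref{prel:thm:connsum}(a), which gives $q(G)=o(G)$ for every locally pseudocompact group. Since $G$ is totally disconnected, $o(G)=q(G)=\{e\}$, so the open subgroups of $G$ separate points and the linear group topology they generate is the desired coarser (Hausdorff) zero-dimensional topology. Part~(b) then follows immediately from minimality, and $z(G)=o(G)$ is again Theorem~\ref{prel:thm:connsum}(a). That is the entire proof.

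Your route through $\varphi\colon G\to\widetilde G/(\widetilde G)_0$ is perfectly viable, but the injectivity of $\varphi$---i.e., $G\cap(\widetilde G)_0=\{e\}$---is \emph{exactly} the other equality in Theorem~\ref{prel:thm:connsum}(a), namely $q(G)=(\widetilde G)_0\cap G$. So your ``technical crux'' is a result already on the table, and your elaborate local-to-global clopen argument is both unnecessary and, as written, incomplete: you never explain how the component $C$ of $e$ in the compact neighborhood $\overline V^{\widetilde G}$ forces $g\in(\widetilde G)_0\cap\overline V^{\widetilde G}$ into $C$. If you insist on proving $(\widetilde G)_0\cap G\subseteq q(G)$ directly, the clean argument is not local: a clopen $D\subseteq G$ gives a continuous $\chi_D\colon G\to\{0,1\}$, which extends to $\beta G$; since $\beta G=\beta\widetilde G$ by Theorem~\ref{prel:thm:lcps}, restricting to $\widetilde G$ yields a clopen subset of $\widetilde G$ whose trace on $G$ is $D$. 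Any element of $(\widetilde G)_0$ then lies in this clopen set (connectedness), hence in $D$. This replaces your entire disjoint-closures paragraph and closes the gap in one step.
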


\begin{Ltheorem} \label{thm:main:totminLPS}
Let $G$ be a totally minimal locally pseudocompact group. Then 
\mbox{$G_0 \hspace{-2pt} = \hspace{-2pt} q(G)$} if and only if 
$G/G_0$ is   zero-dimensional, in which case
$G$ is strongly Vedenissoff.
\end{Ltheorem}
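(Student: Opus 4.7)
The plan is to reduce the equivalence to Theorem~\ref{thm:main:coarser}(b) via an elementary identity relating the quasi-components of $G$ and $G/G_0$. Let $\pi\colon G\to G/G_0$ be the quotient map. I would first record that every clopen subset $U$ of $G$ containing $e$ is $G_0$-saturated: each coset $gG_0$ is connected, so $gG_0\cap U$ is clopen in $gG_0$, forcing $gG_0\subseteq U$ or $gG_0\cap U=\varnothing$. Hence $U=\pi^{-1}(\pi(U))$ with $\pi(U)$ clopen in $G/G_0$, and conversely preimages of clopen sets in $G/G_0$ are clopen in $G$. Intersecting over all such $U$ yields the identity
\begin{equation*}
q(G)=\pi^{-1}(q(G/G_0)), \qquad\text{equivalently}\qquad q(G)/G_0=q(G/G_0).
\end{equation*}

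With this identity in hand, $(\Leftarrow)$ is immediate: if $G/G_0$ is zero-dimensional, it is totally disconnected, so $q(G/G_0)$ is trivial, and therefore $q(G)=G_0$. For $(\Rightarrow)$, assume $G_0=q(G)$. The identity above then shows that $G/G_0$ is totally disconnected. Next I would verify that $G/G_0$ inherits the remaining hypotheses of Theorem~\ref{thm:main:coarser}(b). Total minimality is automatic, since any Hausdorff quotient of $G/G_0$ has the form $G/M$ for some closed normal $M\supseteq G_0$, and $G/M$ is minimal by the total minimality of $G$; and local pseudocompactness is preserved under Hausdorff quotients of topological groups, giving that $G/G_0$ is locally pseudocompact. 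Theorem~\ref{thm:main:coarser}(b) then yields that $G/G_0$ is zero-dimensional.

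Once $G/G_0$ is zero-dimensional, $G$ is Vedenissoff, and the equality $z(G)=o(G)$ follows from Theorem~\ref{prel:thm:connsum}(a) (as highlighted in the discussion after the definition of ``strongly Vedenissoff''); thus $G$ is strongly Vedenissoff. The only genuinely delicate point I anticipate is the preservation of local pseudocompactness under the quotient $\pi$, since pseudocompactness does not in general pass cleanly to closures of open images. I would address this either by invoking a preparatory lemma of the paper or by passing to the Ra\u{\i}kov completion $\widetilde G$, which is locally compact by the standard characterization of locally pseudocompact groups, and observing that the completion of $G/G_0$ is a quotient of $\widetilde G$, so that local compactness transfers through.
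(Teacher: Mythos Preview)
Your argument is correct and follows essentially the same route as the paper: both reduce the substantive direction to Theorem~\ref{thm:main:coarser}(b) by observing that $G/G_0=G/q(G)$ is minimal, locally pseudocompact, and totally disconnected, and then read off strong Vedenissoff via Theorem~\ref{prel:thm:connsum}(a). The paper is terser: it simply asserts that $G/q(G)$ is totally disconnected (the standard fact your identity $q(G/G_0)=q(G)/G_0$ makes explicit) and, for the easy direction, quotes Theorem~\ref{prel:thm:connsum}(c) rather than your self-contained clopen-saturation argument.

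One small caution on your final paragraph: passing to $\widetilde G$ and noting that the completion of $G/G_0$ is a locally compact quotient of $\widetilde G$ only gives local \emph{precompactness} of $G/G_0$; you still need $G_\delta$-density in the completion to conclude local pseudocompactness. This follows immediately from Theorem~\ref{prel:thm:lcps}, since the preimage of a nonempty $G_\delta$-set in $\widetilde G/\operatorname{cl}_{\widetilde G}G_0$ is a nonempty $G_\delta$-set in $\widetilde G$ and hence meets $G$. The paper, like you, takes this preservation for granted.
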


More than twenty years ago,  Arhangel$'$ski\u{\i} asked whether every 
totally disconnected topological group admits a coarser zero-dimensional 
group topology. Megrelishvilli answered this question in the negative by 
constructing a minimal totally disconnected group that is not 
zero-dimensional (cf.~\cite{Megrel4}). In particular, the converse of the 
implication ($*$) fails for minimal groups. Megrelishvilli's example
also shows that local pseudocompactness cannot be omitted from 
Theorem~\ref{thm:main:coarser}.

Our last result is a negative one, and it is a far reaching extension of 
the result of Comfort and van Mill cited in Example~\ref{intro:ex:CvM}.
Recall that a 
group $G$ is {\itshape perfectly} ({\itshape totally}) {\itshape minimal} 
if the product \mbox{$G \hspace{-1.6pt}\times\hspace{-2.5pt} H$} 
is ({\itshape totally}) {\itshape 
minimal} for every ({\itshape totally}) {\itshape minimal} group 
$H\hspace{-1.5pt}$ (cf.~\cite{Stoy}).

\begin{Ltheorem} \label{thm:main:example}
For every natural number $n$ or \mbox{$n\hspace{-2pt} =\hspace{-1pt} 
\omega$}, there exists an abelian pseudocompact group $G_n$ such that 
$G_n$ is perfectly totally minimal, hereditarily disconnected, but
\mbox{$\dim G_n \hspace{-2pt} = \hspace{-1pt}n$}.
\end{Ltheorem}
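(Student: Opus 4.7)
The plan is to realize $G_n$ as a carefully chosen dense subgroup of a compact abelian group $K_n$ with $\dim K_n = n$, exploiting the classical correspondence: for a dense subgroup $G$ of a compact abelian group $K$, pseudocompactness of $G$ is equivalent to $G_\delta$-density (Comfort--Ross), and total density of $G$ in $K$ (i.e., $G\cap N$ dense in $N$ for every closed subgroup $N\leq K$) makes $G$ totally minimal, and in fact perfectly totally minimal because $K$ is compact. Hereditary disconnectedness will be forced by requiring that $G_n$ meet the connected component $C$ of $K_n$ only at~$0$: any connected subset of $G_n$ through $0$ is also connected in $K_n$, hence lies in $C$, so the connected component of $0$ in $G_n$ collapses to $\{0\}$.

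Concretely, take $K_n=\T^n\times\prod_{\alpha<\mathfrak{c}}\Z(2)$ (with $\T^\omega$ in place of $\T^n$ when $n=\omega$), a compact abelian group of weight $\mathfrak{c}$ with $\dim K_n = n$ and connected component $C=\T^n\times\{0\}$. The target is a subgroup $G_n\leq K_n$ satisfying \textbf{(i)} $G_n\cap C=\{0\}$, \textbf{(ii)} $G_n$ totally dense in $K_n$, and \textbf{(iii)} $G_n$ $G_\delta$-dense in $K_n$. By the remarks above, (i) yields hereditary disconnectedness, (ii) yields perfect total minimality, and (iii) yields pseudocompactness, while the equality $\dim G_n=\dim K_n=n$ follows from the Comfort--van Mill theorem that dense pseudocompact subgroups of a compact group share its covering dimension.

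I would build $G_n$ by transfinite recursion of length $\mathfrak{c}$. Enumerate as $\{(N_\beta,V_\beta)\}_{\beta<\mathfrak{c}}$ all pairs where $N_\beta$ is a closed subgroup of $K_n$ and $V_\beta$ is a non-empty basic relatively open subset of $N_\beta$, and enumerate as $\{D_\beta\}_{\beta<\mathfrak{c}}$ a base of non-empty $G_\delta$-subsets of $K_n$; each enumeration has length $\mathfrak{c}$ since $K_n$ has weight $\mathfrak{c}$. At stage $\beta$ one has a subgroup $H_\beta\leq K_n$ of cardinality $<\mathfrak{c}$ with $H_\beta\cap C=\{0\}$, and one picks $x_\beta\in V_\beta$ and $y_\beta\in D_\beta$ so that $\langle H_\beta,x_\beta,y_\beta\rangle\cap C=\{0\}$; taking unions at limits and setting $G_n:=\bigcup_\beta H_\beta$ yields the desired group, since each closed subgroup of $K_n$ is caught by cofinally many $N_\beta$ and each non-empty $G_\delta$-set is caught by cofinally many~$D_\beta$.

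The main obstacle is the simultaneous avoidance step at each stage. The ``forbidden'' set of $x\in V_\beta$ for which $\langle H_\beta,x\rangle$ meets $C\setminus\{0\}$ is a union of at most $|H_\beta|\cdot\aleph_0<\mathfrak{c}$ cosets of $C$, hence has cardinality $\leq \mathfrak{c}$. To find $x_\beta$ (and likewise $y_\beta$) one must therefore verify that $V_\beta$ and $D_\beta$ each have cardinality $2^\mathfrak{c}>\mathfrak{c}$. This requires a structural analysis: every non-empty relatively open subset of any non-trivial closed subgroup $N\leq K_n$ has size $2^\mathfrak{c}$, and likewise every non-empty $G_\delta$-subset of $K_n$ does --- the latter because in the totally disconnected factor $\prod_{\alpha<\mathfrak{c}}\Z(2)$, every non-empty $G_\delta$-set contains a coset of a $G_\delta$-subgroup of index $\leq\mathfrak{c}$, hence of size $2^\mathfrak{c}$. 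This cardinality bookkeeping, together with the closure of the constraints under passage to limits, is the technical heart of the argument; once it is in place the recursion produces $G_n$ with all the required properties.
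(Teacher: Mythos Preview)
Your proposal contains a fatal internal contradiction: conditions \textbf{(i)} and \textbf{(ii)} cannot hold simultaneously. Total density of $G_n$ in $K_n$ means that $G_n\cap N$ is dense in $N$ for \emph{every} closed subgroup $N\leq K_n$; in particular, taking $N=C=\T^n\times\{0\}$, you would need $G_n\cap C$ to be dense in $C$. But condition \textbf{(i)} demands $G_n\cap C=\{0\}$, which is certainly not dense in the non-trivial connected group $C$ when $n\geq 1$. So the transfinite recursion you describe can never succeed: at the very first stage $\beta$ with $N_\beta=C$ and $V_\beta$ a non-empty open subset of $C$ bounded away from $0$, every choice of $x_\beta\in V_\beta$ forces $\langle H_\beta,x_\beta\rangle\cap C\neq\{0\}$, because $x_\beta$ itself lies in $C\setminus\{0\}$. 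This is not a bookkeeping difficulty; it is a structural obstruction. Indeed, total minimality of a dense subgroup of a compact abelian group is \emph{equivalent} to total density, so any totally minimal $G_n\leq K_n$ must meet $C$ densely.

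The paper circumvents this by arranging that $G_n\cap(\widetilde{G_n})_0$ is not trivial but rather a dense, \emph{hereditarily disconnected} subgroup of the connected component---concretely, $q(G_n)\cong(\Q/\Z)^n$ sitting densely inside $(\widetilde{G_n})_0\cong\T^n$. Hereditary disconnectedness of $G_n$ then follows not from avoiding the component, but from $(G_n)_0\subseteq q(G_n)_0\cong((\Q/\Z)^n)_0=\{0\}$. The minimality properties are obtained not by a direct recursion but via an explicit graph construction: one builds a perfectly totally minimal zero-dimensional pseudocompact group $H$ (using Stoyanov's $wtd$ criterion), finds a discontinuous surjection $h\colon\widetilde H\to\T^n$ with $H\subseteq\ker h$, and sets $G_n:=\Gamma_h+(\{0\}\times(\Q/\Z)^n)$ inside $\widetilde H\times\T^n$. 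Then $H\times(\Q/\Z)^n$ is a dense perfectly totally minimal subgroup of $G_n$, and $G_n$ inherits this property. The moral is that ``hereditarily disconnected'' must be achieved by making the quasi-component zero-dimensional as a \emph{space}, not by making it trivial as a group.
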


There are many known examples of pseudocompact groups for which the 
equality \mbox{$G_0\hspace{-2pt}= \hspace{-2pt} q(G)$} fails 
(cf.~\cite[Theorem~11]{Dikdimpsc}, \cite[1.4.10]{Dikconcomp}, 
and~\cite[4.7, 5.5]{ComfGL}). By Theorem~\ref{thm:main:totminLPS},
one has \mbox{$(G_n)_0\hspace{-2pt} \neq \hspace{-2pt} q(G_n)$}~for each 
of the groups $G_n$ provided by Theorem~\ref{thm:main:example}, and thus
the $G_n$ are not totally disconnected. This shows that the converse of 
the implication ($**$) may fail for totally minimal pseudocompact groups.

\bigskip

The paper is structured as follows. In \S\ref{sect:prel}, we recall some 
well-known facts on locally pseudocompact and locally compact groups, 
their $G_\delta$-topologies, and their connectedness properties.
We devote \S\ref{sect:herLPS} to the proof of 
Theorem~\ref{thm:main:herLPS}, while the proofs of 
Theorems~\ref{thm:main:coarser} and~\ref{thm:main:totminLPS} are presented 
in \S\ref{sect:short}. Finally, in \S\ref{sect:exmp}, we prove a general 
theorem concerning embedding of groups with minimality properties as 
quasi-components of pseudocompact groups with the same minimality 
properties, which yields Theorem~\ref{thm:main:example}.

\section{Preliminaries}

\label{sect:prel}

All topological groups here are assumed to be Hausdorff, and thus 
Tychonoff (cf.~\cite[8.4]{HewRos} and \cite[1.21]{GLCLTG}). Except when 
specifically noted, no algebraic assumptions are imposed on the groups; in 
particular, our groups are not necessarily abelian. A ``neighborhood" of a 
point means an {\em open} set containing the point.

Although, in general, there are a number of useful uniform structures on a 
topological group that induce its topology, in this note, we adhere to the 
two-sided uniformity and the notions of precompactness and completeness 
that derive from it (cf. \cite{RoeDie}, \cite{We3}, \cite{Rai}, 
\cite[(4.11)-(4.15)]{HewRos}, and \cite[Section 1.3]{GLCLTG}). A 
fundamental property of this notion of completeness is that for every 
topological group $G\hspace{-1pt}$, there is a complete topological group 
$\widetilde G$ (unique up to a topological isomorphism) that contains $G$ 
as a dense topological subgroup; in other words, $\widetilde G$ is a {\em 
group completion} of $G$ (cf. \cite{Rai} and \cite[1.46]{GLCLTG}).

\begin{ftheorem}[\pcite{1.49(a), 1.51}{GLCLTG}] \mbox{ }
\label{prel:thm:comp}

\begin{myalphlist}

\item
Let $G$ be a topological group, and $H$ a subgroup. Then
\mbox{$\widetilde H  \hspace{-2.5pt}=
\hspace{-1pt} \operatorname{cl}_{\widetilde G} H\hspace{-1.5pt}$.}

\item
If $G$ is a locally compact group, then $G$ is complete, that is,
\mbox{$\widetilde G\hspace{-2.25pt}= \hspace{-2pt}G\hspace{-1pt}$.}

\end{myalphlist}
\end{ftheorem}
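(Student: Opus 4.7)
The plan is to treat the two parts separately, using the universal property of the Raikov (two-sided) completion and basic facts about complete uniform spaces.

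For (a), I would first observe that $\operatorname{cl}_{\widetilde G} H$ is a subgroup of $\widetilde G$, since the closure of a subgroup in a topological group is always a subgroup (as multiplication and inversion are continuous). Because $\widetilde G$ is complete in its two-sided uniformity, and closed subspaces of complete uniform spaces are themselves complete, $\operatorname{cl}_{\widetilde G} H$ is a complete topological group. Since the inclusion $H \hookrightarrow \widetilde G$ is a topological-group embedding, the subspace two-sided uniformity on $H$ inherited from $\widetilde G$ coincides with the intrinsic two-sided uniformity of $H$. Thus $\operatorname{cl}_{\widetilde G} H$ is a complete Hausdorff topological group containing $H$ as a dense subgroup, and by the uniqueness of the completion one obtains $\widetilde H = \operatorname{cl}_{\widetilde G} H$.

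For (b), let $L$ be locally compact and fix a compact neighborhood $K$ of the identity. Given any Cauchy filter $\mathcal F$ on $L$ in the two-sided uniformity, the Cauchy condition yields some $A \in \mathcal F$ with $A A^{-1} \subseteq K$. Picking $a \in A$ gives $A \subseteq K a$, so the compact set $K a$ belongs to $\mathcal F$. The trace of $\mathcal F$ on the compact subspace $K a$ must have a cluster point, and being Cauchy, $\mathcal F$ converges to that point. Hence $L$ is complete, so the canonical dense embedding $L \hookrightarrow \widetilde L$ has closed image, forcing $\widetilde L = L$.

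The main obstacle, slight as it is, lies in part (a): for non-abelian $G$ one must verify the compatibility between the subspace two-sided uniformity on $H \subseteq \widetilde G$ and the intrinsic two-sided uniformity of $H$, so that the completion is literally identified with the closure rather than with some other abstract model. Once that compatibility is in place, both statements reduce to standard facts about complete uniform spaces, and (b) is a routine Cauchy-filter argument.
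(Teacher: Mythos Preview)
Your argument is correct and is essentially the standard textbook proof of these facts. Note, however, that the paper does not supply its own proof of this theorem: it is stated as a background fact with a citation to \cite[1.49(a), 1.51]{GLCLTG} and no proof environment follows it. So there is nothing in the paper to compare your approach against; you have simply reproduced the classical arguments (closure-in-completion via uniqueness of completions for (a), and the Cauchy-filter-trapped-in-a-compact-translate argument for (b)) that one would find in the cited source or in standard references such as Hewitt--Ross. The compatibility of the subspace and intrinsic two-sided uniformities in (a) is indeed routine, since both are generated by the same neighborhood base at the identity restricted to $H$.
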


A subset $X$ of a topological group  $G$ is {\em precompact} if for every
neighborhood $U$ of the identity,
there is a finite \mbox{$S \hspace{-2pt} \subseteq \hspace{-2pt}X$} such 
that  \mbox{$X \hspace{-2.5pt}\subseteq \hspace{-2pt} (SU) 
\hspace{-2.5pt}\cap \hspace{-2.5pt} (U \hspace{-1pt}S)\hspace{-1pt}$}.
(Some authors refer to precompact sets as {\em bounded} ones.)
A~topological group $G$ is {\em locally precompact} if 
$G$ admits a base of precompact neighborhoods at the identity. 
Since every pseudocompact subset of a topological group is precompact 
(cf.~\cite[1.11]{ComfTrig}), locally pseudocompact groups are locally 
precompact.

Weil showed in 1937 that 
the completion of a locally precompact group with respect to its left or 
right uniformity admits the structure of a
locally compact {\em group} containing $G$ as a dense topological subgroup 
(cf. \cite{We3}). This (one-sided) {\it Weil-completion} coincides
with the Ra\v{\i}kov-completion $\widetilde G$ constructed in 1946 
(cf.~\cite{Rai}). Therefore,   $G$ is 
locally precompact if and only if $\widetilde G$ is locally compact.

Theorem~\ref{prel:thm:lcps} below, which summarizes the main results 
of~\cite{ComfRoss2} and~\cite{ComfTrig}, provides a  characterization of 
(locally) pseudocompact groups. Recall that
a {\em $G_\delta$-subset}~\hspace{0.5pt}of a space 
$(X,\mathcal{T})$ is 
a set of the form 
\mbox{$\bigcap\limits_{n < \omega} \hspace{-1.5pt} U_n$} with 
each \mbox{$U_n \hspace{-2pt}\in \hspace{-2pt} 
\mathcal{T}\hspace{-1.5pt}$}.
The {\em $G_\delta$-topology}  
on~$X$ is the topology generated by the $G_\delta$-subsets of 
$(X,\mathcal{T})$. A subset of $X$ is 
{\em  $G_\delta$-open} (respectively, {\em $G_\delta$-closed}, {\em 
$G_\delta$-dense}) if it is open (respectively, closed, dense) in 
the $G_\delta$-topology on $X\hspace{-1pt}$.

\begin{ftheorem}[\cite{ComfRoss2} and \cite{ComfTrig}] 
\label{prel:thm:lcps}
A topological group $G$ is [locally] pseudocompact if and 
only if $G$ is [locally] precompact and $G_\delta$-dense in $\widetilde G$,
in which case 
\mbox{$\widetilde G \hspace{-1.5pt} = \hspace{-2pt}\beta G$}
[\mbox{$\beta \widetilde G \hspace{-1.5pt} = \hspace{-2pt}\beta G$}].
\end{ftheorem}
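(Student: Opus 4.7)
The plan is to prove the global (pseudocompact) case first and then reduce the local case to it via homogeneity, using a neighborhood of the identity with pseudocompact closure. In the global case, the known fact that every pseudocompact subset of a topological group is precompact gives one implication immediately: if $G$ is pseudocompact, then $G$ is precompact, so $\widetilde G$ is compact. The work lies in establishing $G_\delta$-density in $\widetilde G$, together with the identification $\widetilde G = \beta G$ on the way.

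For the forward direction in the global case, I would argue by contradiction: suppose a nonempty $G_\delta$-subset $K=\bigcap_n U_n$ of $\widetilde G$ misses $G$. Using normality of the compact Hausdorff space $\widetilde G$, I would build a descending sequence of closed sets $F_n$ with $F_{n+1}\subseteq \operatorname{int}_{\widetilde G}F_n\subseteq U_n$ and $\bigcap_n F_n \cap G = \varnothing$. Since $G$ is dense in $\widetilde G$, one can pick $g_n\in G\cap(\operatorname{int}_{\widetilde G}F_n\setminus F_{n+1})$ and produce, via Urysohn in $\widetilde G$, a continuous function on $\widetilde G$ whose restriction to $G$ hits $g_n$ with value $n$; this is unbounded on $G$, contradicting pseudocompactness. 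Thus $G$ is $G_\delta$-dense in $\widetilde G$.

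For the converse and the identification $\widetilde G=\beta G$, suppose $G$ is precompact (so $\widetilde G$ is compact) and $G_\delta$-dense in $\widetilde G$. I would show every continuous $f\colon G\to\mathbb{R}$ extends continuously to $\widetilde G$: for each $x\in\widetilde G$, the oscillation of $f$ on $V\cap G$ as $V$ ranges over neighborhoods of $x$ must tend to $0$, because otherwise the sets $\{x'\in\widetilde G : \liminf f>a\}$ and $\{x'\in\widetilde G : \limsup f<a\}$ along $G$ would be two disjoint $G_\delta$-subsets of $\widetilde G$ both clustering at $x$, contradicting $G_\delta$-density. Compactness of $\widetilde G$ then forces the extension, and hence $f$ itself, to be bounded, proving pseudocompactness; the same extension principle gives $\widetilde G=\beta G$.

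For the local case, fix a neighborhood $U$ of $e$ in $G$ with $\operatorname{cl}_G U$ pseudocompact. Then $\operatorname{cl}_G U$ is precompact in $G$, so $G$ is locally precompact and $\widetilde G$ is locally compact. By Theorem~\ref{prel:thm:comp}(a), the closure $\operatorname{cl}_{\widetilde G}U$ is the Ra\v{\i}kov completion of $\operatorname{cl}_G U$; applying the compact case to this subspace shows that $\operatorname{cl}_G U$ is $G_\delta$-dense in $\operatorname{cl}_{\widetilde G}U$. Translating by elements of $G$ (which is dense in $\widetilde G$) and using homogeneity, $G_\delta$-density propagates to all of $\widetilde G$. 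Conversely, if $G$ is locally precompact and $G_\delta$-dense in $\widetilde G$, each compact neighborhood $K$ of $e$ in $\widetilde G$ satisfies that $G\cap K$ is $G_\delta$-dense in $K$, hence pseudocompact by the global case. Finally, the extension argument of the previous paragraph, applied to bounded continuous functions on $G$, yields $\beta\widetilde G = \beta G$. The main obstacle is the careful $G_\delta$-density propagation in the local case: the completion is only locally compact, so one must ensure that translating the local $G_\delta$-density out of $\operatorname{cl}_{\widetilde G}U$ covers $\widetilde G$ and that Urysohn-type extensions do not rely on global compactness.
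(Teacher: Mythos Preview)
The paper does not prove this statement at all: Theorem~\ref{prel:thm:lcps} is quoted as a known result from \cite{ComfRoss2} and \cite{ComfTrig}, and is used throughout as a black box. So there is no ``paper's own proof'' to compare against; what can be assessed is whether your sketch stands on its own.

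Your forward direction in the global case (pseudocompact $\Rightarrow$ precompact and $G_\delta$-dense) is essentially the classical Comfort--Ross argument and is fine, modulo the small care needed to ensure the annuli $\operatorname{int}F_n\setminus F_{n+1}$ are nonempty (arrange this by choosing $g_n\in G\cap U_n\setminus K$ first and then excluding $g_n$ from $F_{n+1}$).

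The converse direction, however, has a genuine gap. Your oscillation argument asserts that if $f$ fails to extend at some $x\in\widetilde G$, then one obtains ``two disjoint $G_\delta$-subsets of $\widetilde G$ both clustering at $x$, contradicting $G_\delta$-density.'' But $G_\delta$-density of $G$ says only that every nonempty $G_\delta$-set meets $G$; it says nothing about disjoint $G_\delta$-sets clustering at a common point, so no contradiction arises from that configuration. Moreover, the sets you describe via $\liminf$ and $\limsup$ are not obviously $G_\delta$. The standard route is different: from an unbounded continuous $f\colon G\to\mathbb{R}$ one builds a decreasing sequence of nonempty open sets $W_n=f^{-1}((n,\infty))$ with $\overline{W_{n+1}}^{G}\subseteq W_n$, lifts them to open $V_n$ in the compact $\widetilde G$ with $\overline{V_{n+1}}\subseteq V_n$, and observes that $\bigcap_n V_n$ is a nonempty $G_\delta$-set disjoint from $G$ (any $g\in G\cap\bigcap_n V_n$ would force $f(g)>n$ for all $n$). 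That is the step where $G_\delta$-density is actually used.

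A smaller issue in the local case: you invoke Theorem~\ref{prel:thm:comp}(a) to identify $\operatorname{cl}_{\widetilde G}U$ with the completion of $\operatorname{cl}_G U$, but that theorem is about closed \emph{subgroups}, not arbitrary closed subsets. The statement you need (the closure in $\widetilde G$ is the completion of the closed subset as a uniform space) is true, but requires a different justification; likewise, your ``global case'' was argued for groups, so applying it to the non-group subset $\operatorname{cl}_G U$ needs the observation that the $G_\delta$-density $\Leftrightarrow$ pseudocompact equivalence holds for dense subspaces of compact spaces in general.
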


Since the $G_\delta$-topology of groups plays an important 
role in the present work, we introduce some notations, and then record a 
few useful facts. We let $\Lambda(G)$ denote the set of closed 
$G_\delta$\mbox{-}subgroups of the topological group $G\hspace{-1pt}$, 
that is, closed subgroups of $G$ that are also $G_\delta$-subsets of 
$G\hspace{-1pt}$, and we set
\mbox{$\Lambda_c (G) \hspace{-2pt} : = \hspace{-2pt}
\{K \hspace{-3pt}\in\hspace{-2pt} \Lambda(G)
\mid K \mbox{ is compact} \}$} and
\mbox{$\Lambda_c^* (G) \hspace{-2pt} : = \hspace{-2pt}
\{K \hspace{-3pt}\in\hspace{-2pt} \Lambda_c(G)  \mid 
K\hspace{-2pt} \triangleleft\hspace{-2pt} G\}$}.

\begin{ftheorem} \label{prel:thm:delta}
Let $G$ be a topological group. Then:

\begin{myalphlist}

\item
{\rm (\cite[2.5]{GLCLTG})} the $G_\delta$-topology is a group topology on 
$G$;

\item
{\rm (\cite[8.7]{HewRos})} if $G$ is locally compact, then $\Lambda_c(G)$ 
is a base at the identity for the $G_\delta$-topology on~$G$;

\item
{\rm (\cite[8.7]{HewRos})} if $G$ is locally compact and $\sigma$-compact, 
then $\Lambda_c^*(G)$ is a base at the 
identity for the $G_\delta$-topology on $G\hspace{-1pt}$.

\end{myalphlist}
\end{ftheorem}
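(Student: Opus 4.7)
The plan is to verify the three parts sequentially, relying on direct constructions for (a) and (b) and invoking a Kakutani--Kodaira-style argument for (c).

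For part (a), I would check that multiplication and inversion remain continuous in the finer topology by working at the identity. Given a basic $G_\delta$-neighborhood $W = \bigcap_{n<\omega} W_n$ of $e$ with each $W_n$ open in the original topology, continuity of multiplication in $G$ together with the existence of symmetric neighborhoods lets me pick open symmetric $U_n \ni e$ with $U_n U_n \subseteq W_n$. Then $U := \bigcap_{n<\omega} U_n$ is a symmetric $G_\delta$-neighborhood of $e$ with $U U \subseteq W$ and $U^{-1} = U$. Since left and right translations are already homeomorphisms in the original topology, they permute $G_\delta$-sets, so the group axioms propagate from the identity to the whole of $G$.

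For part (b), assume $G$ is locally compact and let $U = \bigcap_{n<\omega} V_n$ be a $G_\delta$-neighborhood of $e$. I would first replace $V_0$ by an open neighborhood with compact closure, then inductively build a nested sequence of symmetric open neighborhoods $U_n \subseteq V_n$ satisfying $\overline{U_{n+1}} \subseteq U_n$ and $U_{n+1}^2 \subseteq U_n$. Setting $H := \bigcap_{n<\omega} U_n$, the product condition together with symmetry forces $H$ to be a subgroup; the relations $\overline{H} \subseteq \bigcap_n \overline{U_n} \subseteq \bigcap_n U_{n-1} = H$ show $H$ is closed; and $H \subseteq \overline{V_0}$ is closed inside a compact set, hence compact. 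Finally, $H$ is $G_\delta$ by construction, so $H \in \Lambda_c(G)$ and $H \subseteq U$.

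For part (c), under the additional $\sigma$-compactness hypothesis, I would invoke the Kakutani--Kodaira theorem: for any sequence $(V_n)_{n<\omega}$ of open neighborhoods of $e$ in a $\sigma$-compact locally compact group, there exists a closed normal subgroup $N \subseteq \bigcap_n V_n$ such that $G/N$ is metrizable. Metrizability forces $\{eN\}$ to be a $G_\delta$ in $G/N$, hence $N$ itself is a $G_\delta$ in $G$; compactness is automatic from $N \subseteq \overline{V_0}$. An alternative route would be to start with the compact $G_\delta$-subgroup $H$ from (b) and take its normal core $\bigcap_{g\in G} gHg^{-1}$, using $\sigma$-compactness of $G$ to cut this down to a countable intersection. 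The only real obstacle is precisely here: passing from a compact $G_\delta$-subgroup to a \emph{normal} one requires taming an a priori uncountable intersection of conjugates, and $\sigma$-compactness is the ingredient that enables the reduction---either implicitly via Kakutani--Kodaira or explicitly via a compactness argument on each piece of a countable compact cover $G = \bigcup_n K_n$.
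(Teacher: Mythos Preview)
Your proof is correct in all three parts. The paper, however, does not supply its own proof of this statement: it is recorded as a preliminary fact with attributions to the references indicated in the statement itself, and no argument is given in the body of the paper. Your constructions for (a) and (b)---in particular, building a nested sequence with $\overline{U_{n+1}} \subseteq U_n$ and $U_{n+1}^2 \subseteq U_n$ and intersecting---are exactly the standard ones found in those references, and your primary route for (c) via the Kakutani--Kodaira theorem is precisely what underlies the cited result in Hewitt--Ross. The alternative normal-core argument you sketch for (c) also goes through once fleshed out: writing $G = \bigcup_n K_n$ with each $K_n$ compact and $H = \bigcap_m V_m$ with each $V_m$ open, a tube-lemma argument applied to the continuous map $(g,x) \mapsto g^{-1}xg$ on $K_n \times G$ shows that each set $\bigcap_{g \in K_n} gV_m g^{-1}$ is open, so the full normal core $\bigcap_{g \in G} gHg^{-1}$ is a countable intersection of $G_\delta$-sets and hence itself $G_\delta$.
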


We have already mentioned that the adjective {\itshape hereditary} used in 
the term {\itshape hereditarily locally pseudocompact} applies only to the 
closed subgroups of a given group. The next theorem shows that it would be 
uninteresting to interpret {\itshape hereditary} as applying to all 
subgroups.

\begin{theorem} \label{prel:thm:subLPS}
Let $G$ be a locally pseudocompact group. If every countable subgroup of 
$G$ is locally pseudocompact, then $G$ is discrete.
\end{theorem}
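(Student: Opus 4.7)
The plan is to first observe that a countable locally pseudocompact group is automatically discrete. Indeed, a countable space is Lindel\"of, and a Lindel\"of pseudocompact Tychonoff space is compact; so a countable locally pseudocompact group has a compact neighbourhood of $e$, hence is locally compact, and a countable locally compact Hausdorff group contains an isolated point by the Baire category theorem and must therefore be discrete. Thus the hypothesis is equivalent to saying that every countable subgroup of $G$ is discrete.

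By Theorem~\ref{prel:thm:lcps}, $\widetilde G$ is locally compact and $G$ is $G_\delta$-dense in $\widetilde G$. After replacing $G$ by $G \cap H$ for an open $\sigma$-compact subgroup $H$ of $\widetilde G$ (a topological group is discrete exactly when one of its open subgroups is), I may assume $\widetilde G$ itself is $\sigma$-compact; the hypotheses and $G_\delta$-density transfer without change.

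The main step is to show that every compact $G_\delta$ subgroup $N$ of $\widetilde G$ is finite. Because $N$ is $G_\delta$ in $\widetilde G$, the $G_\delta$-density of $G$ passes to $N$, making $G \cap N$ a $G_\delta$-dense subgroup of $N$, which is therefore pseudocompact by Theorem~\ref{prel:thm:lcps}. A countable subgroup $C$ of $G \cap N$ is discrete by the first paragraph, and $C$ sits inside the compact group $N$; but any discrete subgroup of a compact Hausdorff group is finite, since a discrete subspace that is dense in its (compact) closure forces the closure itself to be compact and discrete. Hence every countable subgroup of $G \cap N$ is finite, and a standard diagonal construction (inductively picking $x_{n+1} \notin \langle x_1, \dots, x_n\rangle$) then forces $G \cap N$ itself to be finite. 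Finally, $G \cap N$ is dense in the Hausdorff space $N$, so $N$ is finite.

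To conclude, Theorem~\ref{prel:thm:delta}(c) provides a compact normal $G_\delta$ subgroup $N$ of $\widetilde G$, which is finite by the preceding paragraph. The quotient $\widetilde G/N$ is locally compact with $\{e\}$ a $G_\delta$, so it is first countable and hence metrizable by Birkhoff--Kakutani. Since $N$ is finite, a countable base at $N$ in $\widetilde G$ (obtained as preimages of a countable base at the identity in $\widetilde G/N$), intersected with a neighbourhood of $e$ separating $e$ from $N \setminus \{e\}$, yields a countable base at $e$ in $\widetilde G$; thus $\widetilde G$ is metrizable. The $G_\delta$-topology on a metrizable space is discrete, so $G_\delta$-density forces $G = \widetilde G$, and $G$ is locally compact metrizable. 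If $G$ were not discrete, a non-trivial convergent sequence $x_n \to e$ would generate a countable non-discrete subgroup of $G$, contradicting the first paragraph. The main obstacle is the finiteness step in the third paragraph, where the hypothesis is leveraged through the impossibility of infinite discrete subgroups in compact groups.
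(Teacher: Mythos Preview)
Your proof is correct and follows the same two-step skeleton as the paper: first show that any compact $G_\delta$ subgroup of $\widetilde G$ is finite (whence $\widetilde G$ has countable pseudocharacter, is metrizable, and equals $G$), then deduce discreteness from the hypothesis applied to a suitable countable subgroup. The only minor tactical differences are that you argue via Baire category where the paper cites van~Douwen's theorem that no countably infinite homogeneous space is pseudocompact, and that your reduction to $\sigma$-compact $\widetilde G$ (so as to invoke Theorem~\ref{prel:thm:delta}(c)) is harmless but unnecessary, since part~(b) already supplies a compact $G_\delta$ subgroup and normality plays no role in your argument.
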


Before we proceed to the proof of Theorem~\ref{prel:thm:subLPS},
we formulate a well-known observation that will be frequently used 
later on too.

\begin{lemma} \label{herLPS:lemma:dense}
Let $H$ be a topological group, $D$ a dense subgroup, and $O$ an open 
subgroup of $H\hspace{-2pt}$. 
Then \mbox{$\operatorname{cl}_H(D\hspace{-2pt}\cap\hspace{-2pt} O) 
\hspace{-2pt} =  \hspace{-1.75pt}O\hspace{-0.5pt}$}. 
\end{lemma}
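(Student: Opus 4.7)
The plan is to prove both inclusions directly, using the fact that in a topological group every open subgroup is automatically closed.

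For the inclusion $\operatorname{cl}_H(D\cap O)\subseteq O$, I would first observe that the open subgroup $O$ is also closed in $H$ (this is the standard argument: $H\setminus O$ is the union of the nontrivial cosets of $O$, each of which is open). Since $D\cap O\subseteq O$ and $O$ is closed, taking the closure preserves containment, so $\operatorname{cl}_H(D\cap O)\subseteq \operatorname{cl}_H(O)=O$.

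For the reverse inclusion $O\subseteq \operatorname{cl}_H(D\cap O)$, I would fix an arbitrary point $x\in O$ and an arbitrary neighborhood $U$ of $x$ in $H$, and show that $U\cap(D\cap O)\neq\emptyset$. Since $O$ is open and $x\in O$, the set $U\cap O$ is a nonempty open subset of $H$ containing $x$. By density of $D$ in $H$, this open set meets $D$, i.e.\ $D\cap(U\cap O)\neq\emptyset$, which is exactly the statement $U\cap(D\cap O)\neq\emptyset$. Hence $x\in\operatorname{cl}_H(D\cap O)$.

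There is no real obstacle here; the only nontrivial ingredient is the standard fact that open subgroups are closed, and after that the argument reduces to a one-line density chase. The lemma is stated as an auxiliary tool, presumably for later use in combining density properties of subgroups such as $G$ inside its completion $\widetilde G$ with open subgroup structure on the completion.
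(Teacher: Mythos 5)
Your proof is correct and follows essentially the same route as the paper: the paper invokes the standard fact that $\operatorname{cl}_H O=\operatorname{cl}_H(D\cap O)$ for $D$ dense and $O$ open (which you verify by hand with the density chase) and then concludes because open subgroups are closed. There is nothing to add.
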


\begin{proof}
Since $D$ is dense and $O$ is open in $H\hspace{-1.75pt}$, one has
\mbox{$\operatorname{cl}_H O \hspace{-2pt}= \hspace{-2pt} 
\operatorname{cl}_H (D\hspace{-2pt}\cap\hspace{-2pt}O)$}. On the other 
hand, $O$ is closed in $H\hspace{-1.75pt}$,  because every open subgroup 
of a topological group is also closed (cf.~\cite[5.5]{HewRos} 
and~\cite[1.10(c)]{GLCLTG}). This completes the proof.
\end{proof}

\begin{proof}[Proof of Theorem~\ref{prel:thm:subLPS}.] The proof consists 
of two steps.

{\itshape Step 1.} We show that $G$ is locally compact and metrizable.
Since $G$ is locally precompact, its completion \mbox{$L \hspace{-2pt} := 
\hspace{-2pt} \widetilde G$} is locally compact, and thus by 
Theorem~\ref{prel:thm:delta}(b), $\Lambda_c(L)$ is a base at the identity 
of the $G_\delta$-topology on $L$. Pick 
\mbox{$K \hspace{-2pt} \in \hspace{-2pt} \Lambda_c(L)$}.
Since $G$ is locally pseudocompact, by Theorem~\ref{prel:thm:lcps},
$G$~is $G_\delta$-dense in $L$.
By Theorem~\ref{prel:thm:delta}(a), the $G_\delta$-topology 
is  a group topology on~$L$,~and so  Lemma~\ref{herLPS:lemma:dense}~is 
applicable. Consequently, by Lemma~\ref{herLPS:lemma:dense},
\mbox{$P \hspace{-2pt}:= \hspace{-2pt} K \hspace{-2pt}\cap\hspace{-2pt} G$} 
is $G_\delta$-dense in $K\hspace{-1.5pt}$.

We claim that $P$ is finite. Let $S$ be a countable subgroup of $P$. Then, 
by our assumption, $S$~is locally pseudocompact, being a countable 
subgroup of $G \hspace{-1pt}$. However, $S$ is also a subgroup of the 
compact group $K$, and thus, by Theorem~\ref{prel:thm:comp}, its 
completion $\widetilde S$ is compact. Consequently, by 
Theorem~\ref{prel:thm:lcps}, $S$ is pseudocompact. Hence, $S$ is finite, 
because there are no countably infinite homogeneous pseudocompact spaces
(cf.~\cite[1.3]{vanDouwPS}). This shows that $P$ is finite. 

Since $P$ is $G_\delta$\mbox{-}dense in $K\hspace{-1pt}$, it follows that 
$K$ is finite, and so $L$ has countable pseudocharacter. This implies that 
$L$ is metrizable, because every locally compact space of countable 
pseudocharacter is first countable (cf.~\cite[3.3.4]{Engel6}). Therefore, 
\mbox{$L\hspace{-2pt} = \hspace{-2pt} G$}, because by $G$ is 
$G_\delta$-dense in $L$.

{\itshape Step 2.} We show that $G$ is discrete. Since $G$ is locally 
compact, there is a neighborhood $U$ of the identity in $G$ such that 
$\operatorname{cl}_G U$ is compact. Then the open subgroup
\mbox{$G^\prime \hspace{-2pt} := \hspace{-2pt} \langle U \rangle$} 
generated by $U$ is compactly generated, and in particular, 
$\sigma$-compact (cf.~\cite[5.12, 5.13]{HewRos}). Thus, $G^\prime$ is 
separable, because it is metrizable, being a subgroup of $G\hspace{-1pt}$.
Let $S$ be a countable dense subgroup of $G^\prime$. By our assumption, 
$S$ is locally pseudocompact, and thus it is locally compact, because $G$ 
is metrizable. So, $S$ is a countable locally compact group, and therefore
$S$ is discrete. In particular, $S$ is complete, and 
\mbox{$G^\prime\hspace{-2pt}=\hspace{-2pt} S\hspace{-1pt}$} is discrete. 
Hence, $G$ is also discrete. 
\end{proof}

\begin{corollary} \label{prel:cor:subLPS}
If every subgroup of a topological group $G$ is locally pseudocompact, 
then $G$ is discrete. \qed
\end{corollary}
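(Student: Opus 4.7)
The plan is to observe that this is an immediate specialization of Theorem~\ref{prel:thm:subLPS}. The hypothesis of the corollary is that \emph{every} subgroup of $G$ is locally pseudocompact, while Theorem~\ref{prel:thm:subLPS} requires two (a priori weaker) conditions: that $G$ itself is locally pseudocompact, and that every countable subgroup of $G$ is locally pseudocompact.

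First I would note that $G$ is a subgroup of itself, so the hypothesis of the corollary directly yields that $G$ is locally pseudocompact. Next, since every countable subgroup is in particular a subgroup, the hypothesis also gives that every countable subgroup of $G$ is locally pseudocompact. Thus both hypotheses of Theorem~\ref{prel:thm:subLPS} are satisfied, and invoking it yields that $G$ is discrete.

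There is no real obstacle here: the corollary is stated precisely so that it can be derived by a one-line reduction to Theorem~\ref{prel:thm:subLPS}, whose proof has already absorbed all the substantive work (the countable-subgroup argument via $G_\delta$-density, the finiteness of $P$, metrizability of the completion, and the observation that a countable locally compact group is discrete). Consequently, the proof amounts to checking the two hypotheses and citing the theorem.
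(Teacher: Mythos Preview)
Your argument is correct and is precisely the intended one: the paper marks the corollary with a bare \qed, indicating that it follows immediately from Theorem~\ref{prel:thm:subLPS} by the very observation you make (that ``every subgroup'' specializes to $G$ itself and to all countable subgroups).
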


Finally, we summarize the relationship between connectedness properties of 
locally pseudocompact groups and their completions.

\begin{ftheorem} \label{prel:thm:connsum}
Let $G$ be a locally pseudocompact group. Then:

\begin{myalphlist}

\item
{\rm (\cite[1.4]{Dikconpsc}, \cite[4.9]{ComfGL})}
\mbox{$q(G)\hspace{-2.1pt}=\hspace{-1.9pt} o(G) 
\hspace{-2.1pt}=\hspace{-1.9pt} 
(\widetilde G)_0 \hspace{-2.1pt} \cap\hspace{-2pt} G$};

\item
{\rm (\cite{TkaDimLP}, \cite[4.12]{ComfGL})}
$G$ is zero-dimensional if and only if $\widetilde G$ is zero-dimensional;

\item
{\rm (\cite[1.7]{Dikconpsc}, \cite[4.15(b)]{ComfGL})}
$G/G_0$ is zero-dimensional if and only if $G_0$ is dense in 
$(\widetilde G)_0$, in which case 
\mbox{$G_0 \hspace{-2pt} = \hspace{-1.5pt} q(G)$}.

\end{myalphlist}
\end{ftheorem}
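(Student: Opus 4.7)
The plan is to exploit the relationship between $G$ and its completion $\widetilde G$, which is locally compact by the Weil--Ra\v{\i}kov completion facts recalled just above, and to which Theorem~\ref{intro:thm:LC} applies directly. Throughout, the $G_\delta$-density of $G$ in $\widetilde G$ (Theorem~\ref{prel:thm:lcps}) is the central bridge between the connectedness-type invariants of $G$ and those of $\widetilde G$.

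For part (a), Theorem~\ref{intro:thm:LC} gives $(\widetilde G)_0 = q(\widetilde G) = o(\widetilde G)$ in the locally compact $\widetilde G$. I would first establish $o(G) = (\widetilde G)_0 \cap G$: the inclusion $o(\widetilde G)\cap G \subseteq o(G)$ is immediate because every open subgroup of $\widetilde G$ meets $G$ in an open subgroup; for the reverse, given an open subgroup $V$ of $G$, Lemma~\ref{herLPS:lemma:dense} realises $\operatorname{cl}_{\widetilde G} V$ as an open subgroup of $\widetilde G$ whose intersection with $G$ is precisely $V$ (since open subgroups are closed), so $(\widetilde G)_0 \subseteq \operatorname{cl}_{\widetilde G} V$ yields $(\widetilde G)_0 \cap G \subseteq V$ for every such $V$. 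The more delicate inclusion is $q(G) \supseteq (\widetilde G)_0 \cap G$: given a clopen subset $C$ of $G$ containing the identity, I would use $G_\delta$-density to promote $C$ to a clopen subset of $\widetilde G$, relying on the fact that in a locally compact group clopen sets are unions of cosets of compact $G_\delta$-subgroups (Theorem~\ref{prel:thm:delta}(b)), so they are detected by their trace on a $G_\delta$-dense subgroup.

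For part (b), one direction is elementary: a subspace of a zero-dimensional Tychonoff space is zero-dimensional, so if $\widetilde G$ is zero-dimensional, then so is $G$. For the converse, assume $G$ is zero-dimensional. Then every point of $G$ has a neighborhood base of clopen subgroups, so $o(G)$ is trivial, and part (a) yields $(\widetilde G)_0 \cap G = \{e\}$. Because $G$ is $G_\delta$-dense in $\widetilde G$ and $(\widetilde G)_0$ is a closed subgroup of the locally compact $\widetilde G$, this forces $(\widetilde G)_0 = \{e\}$, hence $\widetilde G$ is totally disconnected, hence zero-dimensional by Vedenissoff's theorem (cited in the introduction).

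For part (c), combining the inclusions $G_0 \subseteq q(G) = (\widetilde G)_0 \cap G$ (from part (a)) with the fact that $G_0$ is closed in $G$, one checks that $G_0$ is dense in $(\widetilde G)_0$ if and only if $G_0 = (\widetilde G)_0 \cap G = q(G)$; when this holds, $G/G_0$ embeds as a subgroup of $\widetilde G/(\widetilde G)_0$, which is zero-dimensional by Theorem~\ref{intro:thm:LC}, so $G/G_0$ inherits zero-dimensionality. The converse is the main obstacle: assuming $G/G_0$ is zero-dimensional, I would apply part (b) to the locally pseudocompact Hausdorff quotient $G/G_0$ and identify its completion with $\widetilde G/\operatorname{cl}_{\widetilde G} G_0$, so that zero-dimensionality transfers upward and forces $\operatorname{cl}_{\widetilde G} G_0$ to contain $(\widetilde G)_0$; since the reverse inclusion is automatic from connectedness of the closure, this gives $\operatorname{cl}_{\widetilde G} G_0 = (\widetilde G)_0$, i.e.\ $G_0$ is dense in $(\widetilde G)_0$. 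The technically subtle points to verify are that $G/G_0$ is again locally pseudocompact and that its completion is the expected quotient of $\widetilde G$, both of which I expect to need care with the Hausdorffness of the quotient and with the interaction of $G_\delta$-density and closures.
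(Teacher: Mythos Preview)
The paper does not supply its own proof of this theorem: it is stated in the ``fact'' style with citations to Dikranjan, Tkachenko, and Comfort--Luk\'acs, and is used as a black box throughout. So there is no in-paper argument to compare against; one can only assess your sketch on its own merits.

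Your outline for (a) and the ($\Leftarrow$) direction of (c) is essentially sound, though the ``delicate inclusion'' $(\widetilde G)_0\cap G\subseteq q(G)$ in (a) is left as a hand-wave; promoting an arbitrary clopen subset of $G$ to a clopen subset of $\widetilde G$ is not automatic from $G_\delta$-density and needs a real argument. The genuine gap, however, is in your forward direction of (b). You write that zero-dimensionality of $G$ gives a base of clopen \emph{subgroups} (it gives clopen \emph{sets}; you need (a) to pass from $q(G)=\{e\}$ to $o(G)=\{e\}$), and then assert that $(\widetilde G)_0\cap G=\{e\}$ together with $G_\delta$-density of $G$ in $\widetilde G$ forces $(\widetilde G)_0=\{e\}$. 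This step is unjustified and is precisely the nontrivial content: the connected component $(\widetilde G)_0$ is a closed subgroup, but it is \emph{not} in general a $G_\delta$-set, so $G_\delta$-density gives you no handle on $G\cap(\widetilde G)_0$. The cited proofs (Tkachenko; Comfort--Luk\'acs) go through covering-dimension theory instead: one uses $\beta G=\beta\widetilde G$ from Theorem~\ref{prel:thm:lcps} to get $\dim G=\dim\beta G=\dim\beta\widetilde G=\dim\widetilde G$, together with the fact that for locally compact groups $\dim=0$ coincides with zero-dimensionality, and the nontrivial point that for (locally) pseudocompact groups zero-dimensionality implies $\dim=0$.

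Your ($\Rightarrow$) direction of (c) is built on (b), so it inherits the same gap; the identifications $\widetilde{G/G_0}\cong\widetilde G/\operatorname{cl}_{\widetilde G}G_0$ and the local pseudocompactness of $G/G_0$ that you flag as ``technically subtle'' are indeed the remaining points to check, but they are secondary to fixing (b). Also, the biconditional you assert in (c), ``$G_0$ dense in $(\widetilde G)_0$ iff $G_0=(\widetilde G)_0\cap G$'', only holds in the direction you actually use; the converse would again require knowing that $G\cap(\widetilde G)_0$ is dense in $(\widetilde G)_0$, which is the same unproved point.
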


\section{Proof of Theorem~\ref{thm:main:herLPS}}

\label{sect:herLPS}

\begin{Ltheorem*}[\ref{thm:main:herLPS}]
Let $G$ be a hereditarily locally pseudocompact group. Then $G/G_0$ is 
zero-dimensional and \mbox{$G_0\hspace{-2pt} = \hspace{-2pt} q(G)
\hspace{-2pt} = \hspace{-2pt} z(G)\hspace{-2pt} = \hspace{-2pt}o(G)$}, 
that is, $G$ is strongly Vedenissoff.
\end{Ltheorem*}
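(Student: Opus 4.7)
The plan is to reduce the theorem to a single density statement and then attack it through the metrizable quotients $L/K$ of $L := \widetilde G$ by the members of $\Lambda_c^*(L)$. Since $G$ is locally pseudocompact, Theorem~\ref{prel:thm:connsum}(a) yields $q(G) = o(G) = L_0 \cap G =: H$, where $L_0 := (\widetilde G)_0$; combined with the chain $G_0 \subseteq q(G) \subseteq z(G) \subseteq o(G)$, this delivers $q(G) = z(G) = o(G)$ at no cost. The two remaining assertions, namely $G_0 = q(G)$ and the zero-dimensionality of $G/G_0$, are by Theorem~\ref{prel:thm:connsum}(c) both equivalent to the single statement
\[
(\star)\qquad G_0 \text{ is dense in } L_0.
\]
Note that $H$ is closed in $G$ and therefore, by hypothesis, locally pseudocompact, and $H_0 = G_0$.

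After replacing $G$ by the open (hence closed, hence HLP) subgroup $G \cap \langle V \rangle$ for a precompact neighborhood $V$ of the identity --- an operation that preserves both $G_0$ and $L_0$ --- the completion $L$ becomes $\sigma$-compact, so by Theorem~\ref{prel:thm:delta}(c) the family $\Lambda_c^*(L)$ is a base at the identity for the $G_\delta$-topology on $L$. For each $K \in \Lambda_c^*(L)$, let $\pi_K \colon L \to L/K$ denote the quotient map; the $G_\delta$-density of $G$ in $L$ combined with Lemma~\ref{herLPS:lemma:dense} makes the restriction $\pi_K|_G$ a continuous open surjection, giving a topological isomorphism $G/(G \cap K) \cong L/K$ onto the metrizable locally compact group $L/K$. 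The argument now splits into two steps: (i) show $\widetilde H = L_0$, and (ii) upgrade this to $\overline{H_0} = L_0$.

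For (i), applying HLP to the closed subgroup $G \cap L_0 K$ of $G$ and using that metrizable locally pseudocompact groups are locally compact should yield a topological isomorphism $H/(H \cap K) \cong L_0/(L_0 \cap K)$ for each $K$, and hence $H \cdot (L_0 \cap K) = L_0$. Since $\bigcap \{L_0 \cap K : K \in \Lambda_c^*(L)\} = \{e\}$, intersecting gives $\widetilde H = L_0$; in particular each quotient $L_0/(L_0 \cap K)$ is connected. For (ii), since $H \cap K$ is compact and normal in the locally pseudocompact $H$, the identity component of $H/(H \cap K)$ coincides with $H_0 (H \cap K)/(H \cap K)$, so the connectedness of $H/(H \cap K) \cong L_0/(L_0 \cap K)$ forces $H_0 \cdot (H \cap K) = H$. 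Thus $H \subseteq \overline{H_0} \cdot K$ for every $K \in \Lambda_c^*(L)$, and closing and intersecting over $K$ yields $L_0 = \widetilde H \subseteq \bigcap_K \overline{H_0}\cdot K = \overline{H_0}$, establishing $(\star)$.

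The main obstacle will be step (i): producing the isomorphism $H/(H \cap K) \cong L_0/(L_0 \cap K)$. I expect to route this through the algebraic identity $G \cap L_0 K = H \cdot (G \cap K)$, itself a consequence of the $G_\delta$-density of $G$ in $L$, which allows the identification $H/(H \cap K) \cong (G \cap L_0 K)/(G \cap K)$; the latter is then matched topologically with the closed subgroup $L_0 K/K \cong L_0/(L_0 \cap K)$ of $L/K$ via the open map $\pi_K$. The delicate point is the careful interplay between the $G_\delta$-density of $G$ in $L$ and the closed but in general non-$G_\delta$ subgroup $L_0$: it is precisely here that the hereditary hypothesis must be deployed, both to supply the local pseudocompactness of $G \cap L_0 K$ and to ensure that the image of $H$ in the metrizable quotient $L/K$ fills the full subgroup $L_0/(L_0 \cap K)$.
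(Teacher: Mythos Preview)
Your plan contains a genuine gap that appears in both steps and is, at bottom, a circularity. In step~(ii) you assert that ``since $H\cap K$ is compact and normal in the locally pseudocompact $H$, the identity component of $H/(H\cap K)$ coincides with $H_0(H\cap K)/(H\cap K)$.'' First, $H\cap K=G\cap L_0\cap K$ is a closed subgroup of $G$ with compact completion inside $K$, hence \emph{pseudocompact} by HLP and Theorem~\ref{prel:thm:lcps}, but there is no reason for it to be compact. Second, and more fundamentally, the implication ``$N$ compact normal in a locally pseudocompact group $H$ $\Rightarrow$ $(H/N)_0=H_0N/N$'' is not a known principle; it amounts to saying that $H/H_0$ remains hereditarily disconnected after passage to the quotient by a compact normal subgroup, which is precisely a form of the Vedenissoff property for $H$---the very thing you are trying to establish. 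The same circularity already surfaces in step~(i): the ``algebraic identity'' $G\cap L_0K=H\cdot(G\cap K)$ is equivalent to $\pi_K(H)=L_0K/K$, i.e., to $H\cdot(L_0\cap K)=L_0$; but cosets of $L_0\cap K$ need not be $G_\delta$-sets in $L$, so $G_\delta$-density of $G$ does not force them to meet $G$. What HLP and metrizability legitimately deliver is the isomorphism $(G\cap L_0K)/(G\cap K)\cong L_0K/K$, not the desired $H/(H\cap K)\cong L_0/(L_0\cap K)$.

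The paper's proof avoids this trap by a structural dichotomy that your metrizable-quotient scheme does not see. After the same reduction to $\sigma$-compact $L$, it asks whether $L$ contains a non-trivial compact connected subgroup $C$. If yes, then for any $K\in\Lambda_c^*(L)$ the group $P:=KC\cap G$ is \emph{hereditarily pseudocompact} (every closed subgroup has compact completion inside the compact $KC$), and Dikranjan's earlier result (Theorem~\ref{herLPS:thm:herpscp}) gives $P_0$ dense in $(KC)_0\supseteq C$, whence $G_0\neq\{e\}$. If no, every compact subgroup of $L$ is zero-dimensional; Iwasawa's theorem (Theorem~\ref{conn:thm:Iwasawa}) then produces a closed copy of $\mathbb{R}$ inside $L_0$, and a direct argument (Proposition~\ref{herLPS:prop:NxR}, using that topologically finitely generated profinite groups are metrizable) shows that this copy of $\mathbb{R}$ already lies in $G$. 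The hereditary hypothesis is invoked at two quite different, specific places in the two cases; your outline does not isolate either of them.
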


In this section, we present the proof of Theorem~\ref{thm:main:herLPS}. 
By Theorem~\ref{prel:thm:connsum}(a),
\mbox{$q(G)\hspace{-2pt} = \hspace{-2pt} z(G)\hspace{-2pt} = 
\hspace{-2pt}o(G)$} for every locally pseudocompact  group 
$G\hspace{-1pt}$. We have already noted that~if~$G/G_0$  is 
zero-dimensional, then 
\mbox{$G_0\hspace{-2pt} = \hspace{-2pt} q(G)
\hspace{-2pt} = \hspace{-2pt} z(G)$}. Thus, it suffices to show that 
$G/G_0$ is zero-dimensional for every hereditarily locally 
pseudocompact group $G\hspace{-1pt}$. 
Since every (Hausdorff) quotient of a hereditarily locally pseudocompact 
group is again hereditarily locally pseudocompact, and the quotient 
$G/G_0$ is hereditarily disconnected  (cf.~\cite[7.3]{HewRos} 
and~\cite[1.32(c)]{GLCLTG}), it suffices to prove the following statement.

\begin{theorem} \label{herLPS:thm:G0}
Let $G$ be a hereditarily locally pseudocompact, hereditarily disconnected
group. Then $\widetilde G$ is hereditarily disconnected, and $G$ is 
zero-dimensional.
\end{theorem}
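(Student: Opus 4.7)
The plan is to reduce everything to proving $(\widetilde G)_0 = \{e\}$. Granted this, Vedenissoff's theorem applied to the locally compact $\widetilde G$ yields $\widetilde G$ zero-dimensional, and Theorem~\ref{prel:thm:connsum}(b) then gives $G$ zero-dimensional, as required.

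The engine of the argument is the claim that \emph{every} $K \in \Lambda_c(\widetilde G)$ \emph{is zero-dimensional}. Since $G$ is locally pseudocompact, it is $G_\delta$-dense in $\widetilde G$ by Theorem~\ref{prel:thm:lcps}, and by Theorem~\ref{prel:thm:delta}(a,b), each such $K$ is an open subgroup of $\widetilde G$ in its $G_\delta$-topology (which is a group topology). Lemma~\ref{herLPS:lemma:dense} applied in that topology yields $G \cap K$ $G_\delta$-dense in $K$, so Theorem~\ref{prel:thm:lcps} shows $G \cap K$ is pseudocompact with $\widetilde{G \cap K} = K$. Every closed subgroup of $G \cap K$ is closed in $G$ (hence locally pseudocompact by hypothesis) and closed in compact $K$ (hence pseudocompact), so $G \cap K$ is hereditarily pseudocompact. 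Dikranjan's theorem (hereditarily pseudocompact $\Rightarrow$ strongly Vedenissoff) then gives $(G \cap K)/(G \cap K)_0$ zero-dimensional, and the hereditary disconnectedness of $G$ forces $(G \cap K)_0 = \{e\}$, so $G \cap K$ is zero-dimensional; Theorem~\ref{prel:thm:connsum}(b) finally yields $K$ zero-dimensional.

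Suppose towards contradiction that $(\widetilde G)_0 \ne \{e\}$, and pass to an open $\sigma$-compact subgroup $G' \subseteq \widetilde G$ containing $(\widetilde G)_0$. By Theorem~\ref{prel:thm:delta}(c), $\Lambda_c^*(G') \subseteq \Lambda_c(\widetilde G)$ is a base at $e$ for the $G_\delta$-topology on $G'$, and by the preceding claim every such member is zero-dimensional. Using a Yamabe-style approximation, choose $N \in \Lambda_c^*(G')$ with $G'/N$ a Lie group. Since $N$ is zero-dimensional while $(\widetilde G)_0$ is connected and nontrivial, $(\widetilde G)_0 \not\subseteq N$, so the image of $(\widetilde G)_0$ in $G'/N$ is a nontrivial connected Lie subgroup, which contains a closed subgroup $T$ isomorphic to $\T$ or to $\mathbb{R}$. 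Let $M := \pi^{-1}(T)$, a closed subgroup of $\widetilde G$ containing $N$; since $M/N = T$ is metrizable and $N$ is a compact $G_\delta$-subgroup of $M$, $M$ itself is metrizable. Because $M \supseteq N$ is $G_\delta$-open in $\widetilde G$, Lemma~\ref{herLPS:lemma:dense} again gives $G \cap M$ $G_\delta$-dense, hence dense, in $M$. On the other hand, $G \cap M$ is closed in $G$ and locally pseudocompact by hypothesis, and since $M$ is metrizable so is $G \cap M$; but metrizable locally pseudocompact groups are locally compact, so $G \cap M$ is locally compact and therefore closed in $\widetilde G$. Being both dense and closed in $M$, $G \cap M = M$, i.e.\ $M \subseteq G$; yet $M_0 \ne \{e\}$ (it projects onto the nontrivial connected $T$), contradicting the hereditary disconnectedness of $G$. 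The main obstacle I anticipate is the Yamabe step — producing $N \in \Lambda_c^*(G')$ with $G'/N$ Lie — which requires invoking the structure theory of locally compact groups beyond the tools assembled in \S\ref{sect:prel}.
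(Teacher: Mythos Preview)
Your overall architecture is right and close to the paper's: reduce to showing $(\widetilde G)_0=\{e\}$, establish that the compact $G_\delta$-subgroups of $\widetilde G$ are zero-dimensional via Dikranjan's hereditarily pseudocompact theorem, and then use structure theory to locate a copy of $\mathbb R$ (or $\mathbb T$) that must already lie in $G$. One small slip in your engine paragraph: a closed subgroup $S$ of $G\cap K$ is \emph{not} closed in $K$ in general (indeed $G\cap K$ itself is dense in $K$). What you want is that $\widetilde S=\operatorname{cl}_K S$ is compact, and then local pseudocompactness of $S$ plus Theorem~\ref{prel:thm:lcps} gives $S$ $G_\delta$-dense in a compact group, hence pseudocompact. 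This is exactly the argument in the paper's Proposition~\ref{herLPS:prop:compact}, so your engine is fine once phrased correctly.

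The real gap is the sentence ``since $M/N=T$ is metrizable and $N$ is a compact $G_\delta$-subgroup of $M$, $M$ itself is metrizable.'' This is false: take $N=\{0,1\}^{\omega_1}$ and $M=N\times\mathbb R$; then $N$ is a compact normal $G_\delta$-subgroup of $M$ with $M/N\cong\mathbb R$ metrizable, yet $M$ is not metrizable. Nothing you have said forces $N$ to be metrizable, and without that your passage from ``$G\cap M$ is locally pseudocompact'' to ``$G\cap M$ is locally compact'' collapses. (When $T\cong\mathbb T$ you can actually bypass this entirely: $M$ is then compact and contains the $G_\delta$-set $N$, so $M\in\Lambda_c(\widetilde G)$ and your engine says $M$ is zero-dimensional, contradicting $M/N\cong\mathbb T$. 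The problem is genuinely the $T\cong\mathbb R$ case.)

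The paper confronts exactly this obstacle in Proposition~\ref{herLPS:prop:NxR}. After reducing (via Hofmann's commuting trick) to a dense hereditarily locally pseudocompact subgroup $P$ of $N\times\mathbb R$ with $N$ compact zero-dimensional, it does \emph{not} claim $N\times\mathbb R$ is metrizable. Instead it picks two elements $(g_1,1),(g_2,\sqrt 2)\in P$ and passes to the closed subgroup they generate; the first coordinates $g_1,g_2$ topologically generate a profinite group, which is metrizable by Theorem~\ref{herLPS:thm:fg}. Only inside this finitely-generated slice does the ``metrizable locally pseudocompact $\Rightarrow$ locally compact $\Rightarrow$ closed'' step go through, and one then concludes $\{e\}\times\mathbb R\subseteq P$. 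Your Yamabe route can be made to work, but you will need this same finitely-generated-profinite trick (or an equivalent device) to manufacture metrizability where none exists a priori.
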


In the setting of Theorem~\ref{herLPS:thm:G0}, if $\widetilde G$ is 
hereditarily disconnected, then by Theorem~\ref{intro:thm:LC}, $\widetilde 
G$ is zero-dimensional (because it is locally compact), and so $G$ is 
zero-dimensional too. Thus, it suffices to show that $\widetilde G$ is 
hereditarily disconnected whenever $G$ is so. We prove the contrapositive 
of this statement, namely, that if $(\widetilde G)_0$ is non-trivial, then 
$G_0$ is 
non-trivial too. The proof is broken down into several steps: 
First, it is shown in Proposition~\ref{herLPS:prop:NxR} that  
Theorem~\ref{herLPS:thm:G0} holds in the case where the completion 
$\widetilde G$ of $G$ is a direct product of a zero-dimensional compact 
group and the real line $\mathbb{R}$. Then, in 
Proposition~\ref{herLPS:prop:compact}, it 
is proven that if $\widetilde G$ contains a non-trivial compact connected 
subgroup, 
then $G_0$ is non-trivial.  Finally, it is shown that if the component of 
$\widetilde G$ is non-trivial, but contains no compact connected subgroup, 
then $G_0$ contains $\mathbb{R}$ as a closed subgroup.

\begin{proposition} \label{herLPS:prop:NxR}
Let $N$ be a zero-dimensional compact group, and $G$ a dense 
hereditarily locally pseudocompact
subgroup of \mbox{$N \hspace{-2.5pt}\times \hspace{-1.6pt} \mathbb{R}$}.
Then one has 
\mbox{$\{e\}\hspace{-2.5pt}\times \hspace{-1.6pt} \mathbb{R}
\hspace{-2pt}\subseteq \hspace{-2pt}G\hspace{-1pt}$}.
\end{proposition}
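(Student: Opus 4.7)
The plan is to show $L := G \cap (\{e\} \times \mathbb{R})$ equals $\{e\} \times \mathbb{R}$. First I would observe that $L$ is a closed subgroup of $G$ (as the preimage of the closed subgroup $\{e\} \times \mathbb{R}$ of $\widetilde G$), so by the hereditary hypothesis $L$ is locally pseudocompact. As a subspace of the metrizable group $\mathbb{R}$, local pseudocompactness forces local compactness (per the remark in the introduction that every metrizable locally pseudocompact group is locally compact), and a locally compact subgroup of $\mathbb{R}$ is closed. Hence $L \in \{\{0\}, c\mathbb{Z}, \mathbb{R}\}$ for some $c>0$, and the task reduces to ruling out the first two possibilities.

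Next, I would exploit the $G_\delta$-density of $G$ in $\widetilde G$ to identify $\pi_\mathbb{R}|_G \colon G \to \mathbb{R}$ as a topological quotient map. For each $K \in \Lambda_c^*(N)$, the space $(N/K) \times \mathbb{R}$ is locally compact and metrizable, and the image $\phi_K(G)$ is $G_\delta$-dense there (preimages of $G_\delta$-open sets pull back to $G_\delta$-open sets in $\widetilde G$, which meet $G$). By Theorem~\ref{prel:thm:lcps}, $\phi_K(G)$ is locally pseudocompact; being metrizable, it is locally compact, hence closed, and so equals $(N/K) \times \mathbb{R}$. Specialising to $K = N$ yields $\pi_\mathbb{R}(G) = \mathbb{R}$. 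A direct $G_\delta$-density argument---using that each fibre $N \times \{r\}$ is a coset of the $G_\delta$-subgroup $N \times \{0\}$, which $G$ meets densely by (the $G_\delta$-version of) Lemma~\ref{herLPS:lemma:dense}---shows $\pi_\mathbb{R}|_G$ is open. Hence $G$ sits in a topological extension
\[
0 \longrightarrow H \longrightarrow G \stackrel{\pi_\mathbb{R}}{\longrightarrow} \mathbb{R} \longrightarrow 0,
\]
where $H := G \cap (N \times \{0\})$ is pseudocompact (being $G_\delta$-dense in its compact closure in $N$).

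Then I would use the contractibility of $\mathbb{R}$ to produce a continuous homomorphism section $s \colon \mathbb{R} \to G$ of $\pi_\mathbb{R}|_G$. The principal $H$-bundle $G \to \mathbb{R}$ over the contractible base $\mathbb{R}$ is trivial, furnishing a continuous set-theoretic section. Any such section takes values in the identity component $(N \times \mathbb{R})_0 = \{e\} \times \mathbb{R}$ (because $N$ is zero-dimensional and the section's image in the first factor is a connected subset of $N$), hence automatically has image central in $N \times \mathbb{R}$ and a fortiori central in $G$. The obstruction to upgrading the set-section to a homomorphism therefore lies in $H^2_c(\mathbb{R}, Z(H))$ for the abelian centre $Z(H)$, and vanishes by the contractibility of $\mathbb{R}$. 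Once $s$ is a continuous homomorphism, the relation $\pi_\mathbb{R} \circ s = \mathrm{id}$ combined with $s(\mathbb{R}) \subseteq \{e\} \times \mathbb{R}$ forces $s(r) = (e, r)$ for every $r$, so $\{e\} \times \mathbb{R} = s(\mathbb{R}) \subseteq G$.

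The main obstacle is the third step, the production of a continuous homomorphism section. Contractibility of $\mathbb{R}$ trivialises the underlying bundle essentially for free, and reduces the obstruction to abelian continuous cohomology via the centrality observation; however, verifying that the cocycle-trivialisation machinery applies when the fibre $H$ is only pseudocompact (rather than, say, Polish or Lie), and that the reduction to the centre $Z(H)$ is legitimate when $N$ is possibly non-abelian, are the technically delicate points requiring justification.
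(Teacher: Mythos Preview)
Your first two steps are sound: the subgroup $L = G \cap (\{e\} \times \mathbb{R})$ is indeed a closed subgroup of $\mathbb{R}$, and the $G_\delta$-density argument does make $\pi_{\mathbb{R}}|_G$ an open surjection onto $\mathbb{R}$. The failure is in the third step, and it is more than a technical delicacy.

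You assert that the ``principal $H$-bundle'' $G \to \mathbb{R}$ is trivial because the base is contractible. But the contractible-base theorem requires local triviality, equivalently a local continuous section near $0 \in \mathbb{R}$, and you have produced no such thing. The standard selection theorems do not apply, since the fibre $H$ is merely pseudocompact---not compact, not completely metrizable, not Polish. Worse, your own connectedness observation shows the situation is circular: if $s \colon (-\varepsilon, \varepsilon) \to G$ is any continuous section with $s(0) = e$, then $\pi_N \circ s$ has connected image in the zero-dimensional group $N$, so $\pi_N \circ s \equiv e$, hence $s(r) = (e, r)$, and the subgroup generated by $s((-\varepsilon,\varepsilon))$ is already all of $\{e\} \times \mathbb{R}$. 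Thus the existence of even a \emph{local} continuous set-theoretic section is equivalent to the conclusion of the proposition. The cohomological machinery is therefore both unnecessary (a set-section would already finish the proof) and unreachable (one cannot set up $H^2_c$ without first having a continuous or Borel section to define the cocycle, and none is available).

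The paper avoids this trap by never seeking a section of $G$ itself. It instead passes to a \emph{topologically finitely generated} closed subgroup: choosing $(g_1,1),(g_2,\sqrt 2)\in G$ and setting $H=\operatorname{cl}_G\langle(g_1,1),(g_2,\sqrt 2)\rangle$, one has $H\subseteq \operatorname{cl}_N\langle g_1,g_2\rangle\times\mathbb{R}$, and by Theorem~\ref{herLPS:thm:fg} the first factor is metrizable. Hence $H$ is metrizable, so (by hereditary local pseudocompactness) locally compact, hence closed in $N\times\mathbb{R}$. Compactness of $N$ makes $\pi_{\mathbb{R}}|_H$ a closed map whose image contains the dense subgroup $\langle 1,\sqrt 2\rangle$, so $\pi_{\mathbb{R}}|_H$ is a quotient onto $\mathbb{R}$; thus $H$ is not zero-dimensional, and Theorem~\ref{intro:thm:LC} forces $H_0=\{e\}\times\mathbb{R}\subseteq G$. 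The idea you are missing is this reduction to a finitely generated piece, which is what buys genuine local compactness inside $G$.
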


In order to prove Proposition~\ref{herLPS:prop:NxR}, we recall a notion 
and a result that is well-known to profinite group theorists. A 
topological group $P$ is {\itshape topologically finitely generated} if it 
contains a dense finitely generated group, that is, there exists a finite 
subset $F$ of $P$ such that \mbox{$P\hspace{-2pt} = \hspace{-2pt} 
\operatorname{cl}_P \langle F \rangle$}.

\begin{ftheorem}[{\cite[2.5.1]{RibesZales}, \cite[2.1(a)]{DikTkaTkaTHIN}}] 
\label{herLPS:thm:fg}
If $P$ is a topologically finitely generated compact zero-dimensional 
group, then $P$ is metrizable.
\end{ftheorem}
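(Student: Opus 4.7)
The plan is to count the open normal subgroups of $P$ and show that this family is countable. Since $P$ is compact Hausdorff and zero-dimensional, it is profinite, so its open normal subgroups form a neighborhood base at the identity. Hence countability of this family will force $P$ to have countable character at the identity; being a compact Hausdorff group, $P$ will then be metrizable, e.g.\ via the standard equivalence for compact groups between metrizability and countability of the character (or pseudocharacter) at $e$.

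Fix a finite set $F = \{x_1,\dots,x_n\} \subseteq P$ with $P = \operatorname{cl}_P \langle F \rangle$. The central observation is that every continuous homomorphism $\phi \colon P \to Q$ into a finite discrete group $Q$ is determined by its restriction to $F$: two such homomorphisms that agree on $F$ agree on the abstract subgroup $\langle F \rangle$, and hence by continuity on its closure $P$. Consequently, there are at most $|Q|^{n}$ continuous homomorphisms $P \to Q$. For any positive integer $m$ there are only finitely many isomorphism classes of finite groups of order at most $m$, and every open normal subgroup $N \triangleleft P$ with $|P/N| \le m$ is the kernel of a continuous surjection $P \to P/N$ onto a group in one of those classes. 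Summing the bound $|Q|^n$ over these finitely many $Q$'s shows that $P$ has only finitely many open normal subgroups of index at most $m$, and taking the union over $m \in \mathbb{N}$ yields countability.

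The whole argument is essentially a single counting step, so there is no serious obstacle. The only conceptual points are the identification of compact Hausdorff zero-dimensional groups with profinite groups at the outset, which ensures that a topological generating set for $P$ algebraically generates every finite quotient, and at the end the standard fact that a compact group of countable (pseudo)character at the identity is metrizable.
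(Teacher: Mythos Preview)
Your argument is correct and is essentially the standard proof (as found, e.g., in Ribes--Zalesskii): a topologically finitely generated profinite group has only finitely many open normal subgroups of each given index, hence countably many in total, so the identity has a countable neighborhood base and the group is metrizable by Birkhoff--Kakutani.

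Note, however, that the paper does not supply its own proof of this statement. It is recorded as a cited fact (from \cite[2.5.1]{RibesZales} and \cite[2.1(a)]{DikTkaTkaTHIN}) and is invoked as a black box in the proof of Proposition~\ref{herLPS:prop:NxR}. So there is nothing to compare against beyond observing that your write-up matches the argument in the cited sources.
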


\begin{proof}[Proof of Proposition~\ref{herLPS:prop:NxR}.]
As $G$ is locally pseudocompact, by Theorem~\ref{prel:thm:lcps},
$G$ is $G_\delta\mbox{-}$dense in 
\mbox{$N \hspace{-2.5pt}\times \hspace{-1.6pt} \mathbb{R}$}.
The set \mbox{$A_r \hspace{-2pt} := \hspace{-2pt} 
N \hspace{-2.5pt}\times \hspace{-1.6pt}\{r\}$} is a $G_\delta$-set 
in \mbox{$N \hspace{-2.5pt}\times \hspace{-1.6pt} \mathbb{R}$} for every
\mbox{$r\hspace{-2pt} \in \hspace{-2pt} \mathbb{R}$}, and so
\mbox{$A_r\hspace{-2.5pt} \cap \hspace{-2pt} G 
\hspace{-2pt} \neq \hspace{-2pt} \emptyset$}. Thus,~for every
\mbox{$r\hspace{-2pt} \in \hspace{-2pt} \mathbb{R}$}, there is 
\mbox{$g\hspace{-2pt} \in \hspace{-2pt} N$} such that
\mbox{$(g,r) \hspace{-2pt}\in\hspace{-2pt} G$}.

Let \mbox{$g_1,g_2\hspace{-2pt} \in \hspace{-2pt} N$}  be such that
\mbox{$x_1:=(g_1,1) \hspace{-2pt}\in\hspace{-2pt} G$} and
\mbox{$x_2:=(g_2,\sqrt{2}) \hspace{-2pt}\in\hspace{-2pt} G$}, and put
\mbox{$P\hspace{-2pt}:= \hspace{-1pt} \operatorname{cl}_N\langle 
g_1,g_2\rangle$} and 
\mbox{$H\hspace{-2pt}:= \hspace{-1pt} \operatorname{cl}_G\langle
x_1,x_2\rangle$}. Since $P$ is a closed subgroup of $N\hspace{-2pt}$, it 
is a compact topologically finitely generated zero-dimensional group, and 
by Theorem~\ref{herLPS:thm:fg}, $P$ is metrizable. Thus, the product
\mbox{$P \hspace{-2.5pt}\times \hspace{-1.6pt} 
\mathbb{R}\hspace{0.5pt}$}~is metrizable, and so 
$\hspace{-0.25pt}H\hspace{-0.25pt}$ is metrizable, being
a subgroup of \mbox{$P \hspace{-2.75pt}\times \hspace{-2.1pt}
\mathbb{R}$}. On the other hand, $H$ is 
\mbox{locally}~pseudocompact, being 
a closed subgroup of the hereditarily locally 
pseudocompact group~$G\hspace{-1pt}$. Therefore, $H$ is locally 
compact. Hence, by Theorem~\ref{prel:thm:comp},
$H$~is closed not only in $G\hspace{-1pt}$, but also in
\mbox{$N \hspace{-2.5pt}\times \hspace{-1.6pt}\mathbb{R}$}.

Let \mbox{$\pi_2 \colon N \hspace{-2.5pt}\times \hspace{-1.6pt}\mathbb{R}
\rightarrow  \mathbb{R}$} denote the second projection, and put
\mbox{$\pi \hspace{-2pt} := \hspace{-2pt} \pi_{2|H}$}. Since $N$ is
compact and $H$ is closed in \mbox{$N \hspace{-2.5pt}\times 
\hspace{-1.6pt}\mathbb{R}$}, 
$\pi_2$ is a closed map (cf.~\cite[3.1.16]{Engel6}), and thus 
$\pi(H)$ is closed in $\mathbb{R}$ and $\pi$ is a closed map too.  
This implies that  $\pi$ is surjective, 
because $\pi(H)$ contains the dense subgroup $\langle 1,\sqrt{2}\rangle$ 
of $\mathbb{R}$. Consequently, $\pi$ is a quotient map, and $\mathbb{R}$ 
is topologically isomorphic to a quotient of $H\hspace{-1.5pt}$.
Therefore, $H$ is not zero-dimensional, and by Theorem~\ref{intro:thm:LC},
$H_0$ is non-trivial.

Since \mbox{$H_0\hspace{-2pt} \subseteq \hspace{-2pt} 
\{e\}\hspace{-2.5pt}\times \hspace{-1.6pt} \mathbb{R} = 
(N\hspace{-2.5pt}\times \hspace{-1.6pt} \mathbb{R})_0$}, one has
\mbox{$H_0\hspace{-2pt} = \hspace{-2pt}
\{e\}\hspace{-2.5pt}\times \hspace{-1.6pt} \mathbb{R}$}, because 
$\mathbb{R}$ has no non-trivial proper connected subgroups.
Hence, \mbox{$\{e\}\hspace{-2.5pt}\times \hspace{-1.6pt} \mathbb{R}
\hspace{-2pt} = \hspace{-2pt} H_0 \hspace{-2pt}\subseteq\hspace{-2pt} H 
\hspace{-2pt}\subseteq\hspace{-2pt} G$}, as desired.
\end{proof}

\begin{proposition} \label{herLPS:prop:compact}
Let $G$ be a hereditarily locally pseudocompact group such that 
the completion $\widetilde G$ is $\sigma$-compact. If $\widetilde G$
contains a non-trivial compact connected subgroup, then $G_0$ is 
non-trivial.
\end{proposition}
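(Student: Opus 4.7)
The plan is to construct, via the $\sigma$-compactness of $\widetilde G$ and the $G_\delta$-density of $G$, a closed subgroup $H$ of $G$ whose completion admits a non-trivial compact connected quotient, and then to transfer non-triviality from $\widetilde H_0$ to $H_0 \subseteq G_0$.

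First I would fix a non-trivial element $c \in C$ and apply Theorem~\ref{prel:thm:delta}(c) to obtain a compact normal $G_\delta$-subgroup $K$ of $\widetilde G$ contained in a neighborhood of $e$ that avoids $c$, so $c \notin K$. Setting $M := CK$, this is a closed compact subgroup of $\widetilde G$ (compact subgroup of a group times compact normal subgroup is compact), and its quotient $M/K \cong C/(C \cap K)$ is a non-trivial compact connected group.

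Next, let $H := G \cap M$. As a closed subgroup of $G$, $H$ is locally pseudocompact by hypothesis; being contained in compact $M$, it is precompact, hence pseudocompact by Theorem~\ref{prel:thm:lcps}. Its completion $\widetilde H = \operatorname{cl}_{\widetilde G} H$ is then a compact subgroup of $M$, with $H$ being $G_\delta$-dense in $\widetilde H$. Exploiting the $G_\delta$-density of $G$ in $\widetilde G$ together with the fact that every $K$-coset is a $G_\delta$-subset of $\widetilde G$, one concludes $\pi(H) = M/K$, where $\pi \colon \widetilde G \to \widetilde G/K$ is the quotient. Hence $HK = M$, whence $\widetilde H K = M$, yielding the isomorphism $\widetilde H/(\widetilde H \cap K) \cong M/K$, which is a non-trivial compact connected quotient of $\widetilde H$.

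Since $\widetilde H$ is compact and a zero-dimensional compact group is profinite (all its continuous quotients being totally disconnected), $\widetilde H$ cannot be zero-dimensional. By Theorem~\ref{intro:thm:LC} applied to the locally compact group $\widetilde H$, we obtain $\widetilde H_0 \neq \{e\}$. To finish, it suffices to show $H = \widetilde H$: metrizability of $\widetilde H$ combined with $G_\delta$-density of $H$ in $\widetilde H$ (singletons are $G_\delta$ in metrizable spaces) forces $H = \widetilde H$, giving $H_0 = \widetilde H_0 \neq \{e\}$; and since $H_0$ is a connected set in $G$ containing $e$, one has $H_0 \subseteq G_0$, so $G_0 \neq \{e\}$.

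The main obstacle is therefore the metrizability of $\widetilde H$ (equivalently, of $M$). One is led to show that the compact $G_\delta$-subgroup $K$ is metrizable (i.e., has countable pseudocharacter), and that the compact metrizable-by-metrizable extension $M$ has countable weight; this is precisely where the $\sigma$-compactness of $\widetilde G$ and the careful choice of $K \in \Lambda_c^*(\widetilde G)$ furnished by Theorem~\ref{prel:thm:delta}(c) enter, via a tube-lemma argument that extracts from the defining $G_\delta$-representation of $K$ a countable basis of neighborhoods of $e$ inside $K$.
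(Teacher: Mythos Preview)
Your reduction is essentially the paper's: you set $M=CK$ with $K\in\Lambda_c^*(\widetilde G)$, form $H=G\cap M$, and observe that $H$ is pseudocompact with $\widetilde H=M$ and $(\widetilde H)_0\supseteq C\neq\{e\}$. The divergence is only in the last step, and that is where the argument breaks.

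The metrizability claim is false in general, and no ``tube-lemma'' argument can save it. A compact $G_\delta$-subgroup of a $\sigma$-compact locally compact group need not have countable pseudocharacter. For a concrete instance, take $\widetilde G=\{0,1\}^{\omega_1}\times\mathbb{T}$ (compact, hence $\sigma$-compact, with $C=\{0\}\times\mathbb{T}$). Any $K\in\Lambda_c^*(\widetilde G)$ is the intersection of countably many basic open sets, so it contains a product $\{0\}^{S}\times\{0,1\}^{\omega_1\setminus S}\times\{0\}$ for some countable $S\subseteq\omega_1$; hence $w(K)\geq\omega_1$, and $K$ (and therefore $M$ and $\widetilde H$) is not metrizable. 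Consequently the $G_\delta$-density of $H$ in $\widetilde H$ does not force $H=\widetilde H$, and you cannot read off $H_0\neq\{e\}$ from $(\widetilde H)_0\neq\{e\}$ this way.

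What the paper does instead, starting from exactly your $H$ and $M$: it observes that $H$ is not merely pseudocompact but \emph{hereditarily} pseudocompact (any closed subgroup $S$ of $H$ is closed in $G$, hence locally pseudocompact by hypothesis; its completion, being a closed subgroup of the compact group $M$, is compact, so $S$ is in fact pseudocompact). Then it invokes Dikranjan's earlier theorem for hereditarily pseudocompact groups (Theorem~\ref{herLPS:thm:herpscp}): $H_0$ is dense in $(\widetilde H)_0=(KC)_0\supseteq C$, whence $H_0\neq\{e\}$ and $G_0\neq\{e\}$. In other words, the missing ingredient in your sketch is precisely the black box that handles the compact-completion case; once you notice hereditary pseudocompactness of $H$, that black box replaces the failed metrizability step.
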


As one may expect, the proof of Proposition~\ref{herLPS:prop:compact} 
relies on Dikranjan's result for hereditarily pseudocompact groups.

\begin{ftheorem}[\pcite{1.2, 2.6}{DikPS0dim}] \label{herLPS:thm:herpscp}
Let $G$ be a hereditarily pseudocompact group. Then $G/G_0$ is 
zero-dimensional, \mbox{$G_0 \hspace{-2pt} = \hspace{-2pt} q(G)$}, and
$G_0$ is dense in $(\widetilde G)_0$, that is, $G$ is strongly 
Vedenissoff.
\end{ftheorem}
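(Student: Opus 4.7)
The plan is to reduce all four conclusions to the single claim that $G_0$ is dense in $(\widetilde G)_0$. By Theorem~\ref{prel:thm:connsum}(c), this density immediately delivers $G/G_0$ zero-dimensional and $G_0 = q(G)$; combined with $q(G) = o(G)$ from Theorem~\ref{prel:thm:connsum}(a) and the general chain $G_0 \subseteq q(G) \subseteq z(G) \subseteq o(G)$, it yields $G_0 = q(G) = z(G) = o(G)$, so $G$ is strongly Vedenissoff. Since $G$ is pseudocompact, Theorem~\ref{prel:thm:lcps} forces $\widetilde G = \beta G$ to be compact, hence $C := (\widetilde G)_0$ is a compact connected group.

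The non-triviality statement---namely, $C \neq \{e\} \Rightarrow G_0 \neq \{e\}$---is already a direct consequence of Proposition~\ref{herLPS:prop:compact}: $\widetilde G$ is compact, hence $\sigma$-compact, and $C$ itself is a non-trivial compact connected subgroup of $\widetilde G$. Unwound, this is the Proposition~\ref{herLPS:prop:NxR} technique adapted to a torus: fix a maximal torus $\mathbb{T} \hookrightarrow C$, use $G_\delta$-density of $G$ in $\widetilde G$ to pick $g_1, g_2 \in G$ whose $\mathbb{T}$-components are $1$ and $\sqrt{2}$ modulo $\mathbb{Z}$, form $H := \operatorname{cl}_G\langle g_1, g_2\rangle$, apply Theorem~\ref{herLPS:thm:fg} to the topologically finitely generated zero-dimensional quotient $\widetilde H/(\widetilde H)_0$ to force $\widetilde H$ metrizable, invoke Theorem~\ref{prel:thm:comp}(b) to deduce $H = \widetilde H$, and read off that $H_0 \subseteq G_0$ projects onto $\mathbb{T}$ and is therefore non-trivial.

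To bootstrap non-triviality into density, argue by contradiction: suppose $K := \operatorname{cl}_{\widetilde G} G_0 \subsetneq C$. Then $C/K$ is a non-trivial compact connected subgroup of the compact quotient $\widetilde G/K$, and the image $G' := G/(G \cap K)$ is dense and pseudocompact in $\widetilde G/K$, hence $G_\delta$-dense by Theorem~\ref{prel:thm:lcps}. Moreover $G'$ is hereditarily pseudocompact: any closed subgroup of $G'$ is $J/(G \cap K)$ for some closed $J \leq G$ containing $G \cap K$, which is pseudocompact by hypothesis, and the continuous image $J/(G \cap K)$ remains pseudocompact. Fix a maximal torus $\mathbb{T} \hookrightarrow C/K$ and repeat the torus construction: pick $g_1, g_2 \in G$ whose images in $\widetilde G/K$ generate a dense subgroup of $\mathbb{T}$, form $H := \operatorname{cl}_G\langle g_1, g_2\rangle$, and apply the same metrizability argument to conclude $H = \widetilde H$ with $H_0 \subseteq G_0$ projecting onto $\mathbb{T}$ in $\widetilde G/K$. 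But $G_0 \subseteq K$ forces this projection to be trivial---contradiction. The main obstacle is the precise $G_\delta$-selection of $g_1, g_2$ across the quotient $\widetilde G \to \widetilde G/K$: since cosets of $K$ in $\widetilde G$ need not be $G_\delta$-sets, the selection must be made inside $\widetilde G/K$ (exploiting the $G_\delta$-density of $G'$) and then lifted, verifying that the lifted elements retain the connectedness information required to produce a genuine non-trivial connected subgroup of $G$ whose closure in $\widetilde G$ escapes $K$.
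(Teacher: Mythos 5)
There is a genuine gap, and in fact the paper does not prove this statement at all: it is imported verbatim from \cite{DikPS0dim} (the bracketed citation is the ``proof''), and it serves as an external input to the paper's own argument. That context matters for your first step. You derive the non-triviality implication ``$(\widetilde G)_0\neq\{e\}\Rightarrow G_0\neq\{e\}$'' from Proposition~\ref{herLPS:prop:compact}, but the proof of Proposition~\ref{herLPS:prop:compact} given in this paper explicitly invokes Theorem~\ref{herLPS:thm:herpscp} --- the very statement you are trying to prove (the paper even announces this dependence in the sentence preceding the proposition). So that route is circular and cannot be used.

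Your ``unwound'' replacement does not repair this, because the technique of Proposition~\ref{herLPS:prop:NxR} does not survive the passage from $\mathbb{R}$ to a torus. First, in Proposition~\ref{herLPS:prop:NxR} the ambient group is literally a product $N\times\mathbb{R}$, so ``the $\mathbb{R}$-component of $g$'' is meaningful; for a general compact $\widetilde G$ with a (pro)torus $\mathbb{T}\subseteq C=(\widetilde G)_0$ there is no projection onto $\mathbb{T}$, so ``pick $g_1,g_2\in G$ whose $\mathbb{T}$-components are $1$ and $\sqrt2$'' is not a well-defined selection. Second, and more fatally, the metrizability step collapses: Theorem~\ref{herLPS:thm:fg} applies only to \emph{zero-dimensional} topologically finitely generated compact groups, and metrizability of $\widetilde H/(\widetilde H)_0$ does not yield metrizability of $\widetilde H$ when $(\widetilde H)_0$ is non-metrizable. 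A topologically $2$-generated compact \emph{connected} group can fail to be metrizable ($\mathbb{T}^{\mathfrak c}$ is monothetic), so you cannot conclude that $H$ is locally compact, hence cannot conclude $H=\widetilde H$, and the step ``read off that $H_0$ projects onto $\mathbb{T}$'' has nothing to stand on. The special features of $\mathbb{R}$ --- metrizable, no non-trivial compact subgroups, and a closed projection $N\times\mathbb{R}\to\mathbb{R}$ --- are exactly what make Proposition~\ref{herLPS:prop:NxR} work, and exactly what is unavailable when the connected piece is compact; this is precisely why the paper handles the compact-connected case by quoting Dikranjan's theorem rather than by adapting Proposition~\ref{herLPS:prop:NxR}. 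Your reduction of the four conclusions to the single density claim via Theorem~\ref{prel:thm:connsum}(a),(c) is correct, and the bootstrap from non-triviality to density by passing to $\widetilde G/\operatorname{cl}_{\widetilde G}G_0$ is a reasonable skeleton, but both rest on the non-triviality step, which as written is either circular or broken. A genuine proof would have to be supplied along the lines of \cite{DikPS0dim} itself.
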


\begin{proof}[Proof of Proposition~\ref{herLPS:prop:compact}.]
Let $C$ be a non-trivial connected compact subgroup of 
$\widetilde{G}\hspace{-1pt}$.~By 
Theorem~\ref{prel:thm:delta}(c), 
$\Lambda_c^*(\widetilde G)$ is a~base 
at the identity for the $G_\delta$-topology on 
$\widetilde G\hspace{-1pt}$. So, we may pick
\mbox{$K \hspace{-3pt} \in \hspace{-2pt} \Lambda_c^*(\widetilde G)$}.
Since $K$ is a~normal subgroup of $\widetilde G\hspace{-1pt}$, the set
$KC$ is a subgroup of $\widetilde G\hspace{-1pt}$, and 
$KC$ is compact, because both $K$ and $C$ are compact. Furthermore,
$KC$ is $G_\delta$-open in $\widetilde G\hspace{-1pt}$, as it contains the 
$G_\delta$-set $K\hspace{-1.5pt}$. By Theorem~\ref{prel:thm:lcps},
$G$~is $G_\delta$-dense in $\widetilde G\hspace{-1pt}$. 
By Theorem~\ref{prel:thm:delta}(a), the $G_\delta$-topology 
is  a group topology on~$\widetilde G\hspace{-1pt}$,~and so 
Lemma~\ref{herLPS:lemma:dense} is applicable. 
Consequently, by 
Lemma~\ref{herLPS:lemma:dense},
\mbox{$P \hspace{-2pt}:= \hspace{-2pt} KC \hspace{-2pt}\cap\hspace{-2pt} 
G$} is $G_\delta$-dense in $KC\hspace{-0.5pt}$; in particular, $P$ is 
dense in $KC\hspace{-1pt}$, and thus, by Theorem~\ref{prel:thm:comp},
\mbox{$\widetilde P \hspace{-2pt} = \hspace{-2pt} KC\hspace{-1.5pt}$}.

We show that $P$ is hereditarily pseudocompact.
 To that end, let $S$ be a closed subgroup 
of~$P\hspace{-2pt}$. Since $KC$ is compact, it is closed in 
$\widetilde G\hspace{-1pt}$, and so $P$ is a closed subgroup of 
$G\hspace{-1pt}$. Thus,~$S$~is~a~closed 
subgroup of $G$, and by our assumption, $S$ is locally pseudocompact. 
By Theorem~\ref{prel:thm:lcps}, this implies that $S$ is $G_\delta$-dense 
in its completion $\widetilde S\hspace{-1pt}$. The group
$\widetilde S$ is compact, being a closed 
subgroup~of~$K\hspace{-0.5pt}C\hspace{-1pt}$,~and 
therefore $S$ is not only locally pseudocompact, but also
pseudocompact. This shows that $P$ is hereditarily 
pseudocompact. Therefore, by Theorem~\ref{herLPS:thm:herpscp}, $P_0$ is 
dense in 
\mbox{$(\widetilde P)_0 \hspace{-2pt} = \hspace{-2pt} (KC)_0$}, and hence
\begin{align*}
\{e\} \neq
C \subseteq (KC)_0 \subseteq \operatorname{cl}_{KC} P_0 \subseteq
\operatorname{cl}_{\widetilde G} G_0.
\end{align*}
In particular, $G_0$ cannot be trivial, as desired.
\end{proof}

One last ingredient of the proof of Theorem~\ref{herLPS:thm:G0} is a 
result that is often referred to as Iwasawa's Theorem (although it also 
relies on the work of Yamabe).

\begin{ftheorem}[\pcite{Theorem 5'}{Yamabe1}, \pcite{Theorem~13}{Iwas}]
\label{conn:thm:Iwasawa}
Let $L$ be a connected locally compact group. Then there is a compact 
connected subgroup $C\hspace{-1pt}$, and closed subgroups 
\mbox{$H_1,\hspace{-1pt}\ldots,\hspace{-1.0pt}H_r$} such that 
each $H_i$ is topologically isomorphic to the additive group $\mathbb{R}$, 
and $L$ is homeomorphic to 
\mbox{$H_1 \hspace{-2pt} \times 
\hspace{-2pt}\cdots\hspace{-1pt} \times \hspace{-1pt} H_r \hspace{-2pt} 
\times \hspace{-2pt} C \hspace{-1pt}$}.
\end{ftheorem}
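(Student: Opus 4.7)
The plan is to reduce to the case of a connected Lie group and then invoke the classical structure theory for such groups. The essential input is the Gleason--Yamabe theorem (the locally compact case of Hilbert's fifth problem), which asserts that every connected locally compact group $L$ is an inverse limit of Lie groups: for every neighborhood $U$ of the identity, there exists a compact normal subgroup $N \subseteq U$ of $L$ such that $L/N$ is a connected Lie group. I would apply this at the outset to pick some $N$ small enough to replace $L$ by the connected Lie group $\bar L := L/N$.

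Once reduced to a connected Lie group $\bar L$, I would invoke the Cartan--Malcev--Iwasawa theorem, which produces a maximal compact (connected) subgroup $\bar C$ of $\bar L$, unique up to conjugacy, together with closed one-parameter subgroups $\bar H_1, \ldots, \bar H_r$, each topologically isomorphic to $\mathbb{R}$, such that the multiplication map
\begin{equation*}
\bar H_1 \times \cdots \times \bar H_r \times \bar C \longrightarrow \bar L
\end{equation*}
is a homeomorphism. The Lie-theoretic proof proceeds via the Levi decomposition, the Iwasawa $KAN$-decomposition in the semisimple case, and careful use of the exponential map to construct a global cross-section.

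To lift the decomposition back to $L$, let $\pi : L \to \bar L$ denote the quotient map and set $C := \pi^{-1}(\bar C)$. Then $C$ is a compact subgroup of $L$ (a compact extension of a compact group) and contains $N$. Since each $\bar H_i \cong \mathbb{R}$ is contractible and simply connected, standard covering/extension arguments yield a closed subgroup $H_i$ of $L$ topologically isomorphic to $\mathbb{R}$ that maps homeomorphically onto $\bar H_i$. Transporting the product structure of $\bar L$ through $\pi$ and gluing in the compact fiber produces the desired homeomorphism $L \cong H_1 \times \cdots \times H_r \times C$.

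The main obstacle is the Gleason--Yamabe theorem itself, whose proof forms the technical heart of the solution to Hilbert's fifth problem and relies on an intricate \emph{no small subgroups} analysis together with a delicate construction of one-parameter subgroups out of approximate ones; I would treat it as a black box. The Cartan--Malcev--Iwasawa structure theorem for connected Lie groups, while non-trivial, is more classical and follows in a standard way once Yamabe's reduction is available.
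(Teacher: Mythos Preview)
The paper does not supply a proof of this statement: it is quoted as a known structure theorem, attributed via citation to Yamabe and Iwasawa, and then invoked as a black box in Step~2 of the proof of Theorem~\ref{herLPS:thm:G0}. There is therefore no argument in the paper to compare your proposal against.

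That said, your outline is the standard route to the result and is essentially sound. Two steps deserve more care than your sketch gives them. First, lifting each $\bar H_i \cong \mathbb{R}$ to a closed copy of $\mathbb{R}$ inside $L$ is not really a matter of covering-space theory, since the kernel $N$ is compact but in general neither discrete nor connected; what is actually needed is a continuous splitting of the extension $1 \to N \to \pi^{-1}(\bar H_i) \to \mathbb{R} \to 1$, and this lifting of one-parameter subgroups through compact normal kernels is one of the technical lemmas in Iwasawa's paper rather than a formal consequence of simple connectivity. Second, you should verify that $C := \pi^{-1}(\bar C)$ is \emph{connected}; this is not immediate from the connectedness of $\bar C$ when $N$ is disconnected, though it does follow once the global homeomorphism $L \cong \mathbb{R}^r \times C$ is in hand, since $L$ itself is connected. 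Neither point is fatal, and both are handled in the very references the paper cites.
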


\begin{proof}[Proof of Theorem~\ref{herLPS:thm:G0}.] We prove the 
contrapositive of the theorem. Let $G$ be a
hereditarily locally pseudocompact group such that
\mbox{$(\widetilde G)_0 \hspace{-2pt} \neq \hspace{-2pt} \{e\}$}.
We show that \mbox{$G_0 \hspace{-2pt} \neq \hspace{-2pt} \{e\}$}.

{\itshape Step 1.}
As $G$ is locally precompact, its completion \mbox{$L \hspace{-2pt} := 
\hspace{-2pt} \widetilde G$} is locally compact.
Let $U$ be a~neighborhood of the identity in $L$ such that 
$\operatorname{cl}_L U$ is \mbox{compact}. Put 
\mbox{$L^\prime \hspace{-2.5pt} := \hspace{-2pt} \langle U \rangle$}, 
the subgroup generated by~$U\hspace{-2pt}$, and \mbox{$G^\prime 
\hspace{-2.5pt} :=\hspace{-2pt} L^\prime \hspace{-2pt} \cap \hspace{-2pt} G$}. 
We claim that by replacing  $G$ with $G^\prime$ if necessary, we may 
assume that $\widetilde G$ is $\sigma$-compact from the outset.

Since $L^\prime$ contains $U\hspace{-2pt}$, it is an open subgroup of 
$L$, and thus it is also closed (cf.~\cite[5.5]{HewRos} 
and~\cite[1.10(c)]{GLCLTG}). Consequently,  $G^\prime$ is open and closed 
in $G\hspace{-1pt}$, and $G^\prime$ is also hereditarily locally 
pseudocompact.  By Lemma~\ref{herLPS:lemma:dense}, one has
\mbox{$L^\prime \hspace{-2pt} = \hspace{-1pt}
\operatorname{cl}_L G^\prime\hspace{-2pt}$}, and so, by 
Theorem~\ref{prel:thm:comp},
\mbox{$L^\prime \hspace{-2pt} = \hspace{-2pt}
\widetilde{G^\prime}$\hspace{-1.5pt}}. As $L^\prime$ 
is generated by~$U\hspace{-2pt}$, it is compactly generated, and in 
particular, it is $\sigma$-compact (cf.~\cite[5.12, 5.13]{HewRos}).
Since $L^\prime$ is an open subgroup of~$L$, by
Theorem~\ref{intro:thm:LC},
\mbox{$L_0 
\hspace{-2pt}= \hspace{-2pt}o(L) \hspace{-2pt} \subseteq 
\hspace{-2pt} L^\prime \hspace{-2pt}$}, and therefore 
\mbox{$L_0 \hspace{-2pt}= \hspace{-2pt} L^\prime_0$} (because
\mbox{$L^\prime\hspace{-2pt} \subseteq \hspace{-2pt}L$} implies 
\mbox{$L^\prime_0\hspace{-2pt} \subseteq \hspace{-2pt}L_0$}). 
Similarly, one has \mbox{$G_0 \hspace{-2pt}= \hspace{-2pt} G^\prime_0$}.
Hence, it suffices to show that 
\mbox{$G^\prime_0 \hspace{-2pt}\neq \hspace{-2pt}\{e\}$}, and
by replacing $G$ with $G^\prime$ if necessary, we may assume 
that $\widetilde G$ is $\sigma$-compact.

{\itshape Step 2.} Since $\widetilde G$ is $\sigma$-compact, if 
$\widetilde G$ contains a non-trivial compact connected subgroup, then
by Proposition~\ref{herLPS:prop:compact}, $G_0$ is non-trivial, and we are 
done. Thus, from now on,  we  assume that $\widetilde G$ contains no 
non-trivial compact connected subgroups. 

If $N$ is a compact subgroup of 
$\widetilde G$, then \mbox{$N_0\hspace{-2pt} = \hspace{-2pt} \{e\}$}, and 
by Theorem~\ref{intro:thm:LC}, $N$ is zero-dimensional. Thus, every 
compact subgroup of $\widetilde G$ is zero-dimensional.
In particular, $(\widetilde G)_0$~contains 
no non-trivial compact connected subgroups. Therefore, by 
Theorem~\ref{conn:thm:Iwasawa}, our assumption 
\mbox{$(\widetilde G)_0 \hspace{-2pt} \neq \hspace{-2pt} \{e\}$} yields
that there is a closed subgroup $H$ of $\widetilde G$ such that 
\mbox{$H\hspace{-2pt} \cong \hspace{-2pt} \mathbb{R}$} (that is, 
$H$ is topologically isomorphic to the additive group $\mathbb{R}$).

{\itshape Step 3.} 
By Theorem~\ref{prel:thm:delta}(c), 
$\Lambda_c^*(\widetilde G)$~is a~base 
at the identity for the $G_\delta$-topology on 
$\widetilde G\hspace{-1pt}$,~and so we may pick
\mbox{$N \hspace{-3pt} \in \hspace{-2pt} \Lambda_c^*(\widetilde G)$}.
Since $N$ is a 
normal subgroup, $(\widetilde G)_0$ acts continuously on $N$ by 
conjugation, and the orbit of $x$ is a connected subspace of 
$N\hspace{-2pt}$. By Step 2, $N$~is zero-dimensional.
Thus,~the orbit of each
\mbox{$x \hspace{-2pt} \in \hspace{-2pt} N\hspace{-2pt}$}
is a singleton. Therefore, 
\mbox{$g^{-1} x g \hspace{-2pt} = \hspace{-2pt} x$} for every 
\mbox{$g \hspace{-2pt}\in\hspace{-2pt} (\widetilde G)_0$}
and \mbox{$x \hspace{-2pt} \in \hspace{-2pt} N\hspace{-2pt}$}. In 
particular, the elements of $H$ and $N$ commute (elementwise).
We note that this argument, concerning the commuting of connected and 
zero-dimensional normal subgroups, is due to K. H. Hofmann 
(cf.~\cite{Hof0dim-trick}).

Let \mbox{$f\colon \hspace{-1.25pt} \mathbb{R} \hspace{-1pt} 
\rightarrow\hspace{-1.5pt} H$} be a topological isomorphism. 
The continuous surjection \mbox{$h\colon \hspace{-1.25pt} 
N \hspace{-2.25pt}\times\hspace{-0.75pt} \mathbb{R} 
\hspace{-0.5pt}\rightarrow \hspace{-2pt} N \hspace{-0.6pt}H$}~given by 
\mbox{$h(x,r)\hspace{-2pt} =\hspace{-2pt}xf(r)$} is a  homomorphism, 
because  $f$ is a homomorphism, and $N$ and $H$  \mbox{commute} 
(elementwise). We show that $h$ is a topological  isomorphism. Since 
\mbox{$H\hspace{-2pt} \cong \hspace{-2pt} \mathbb{R}$}, the only compact 
subgroup of $H$ is the trivial one, and thus 
\mbox{$N\hspace{-2.5pt}\cap\hspace{-2pt}H\hspace{-2pt}
= \hspace{-2pt} \{e\}$}. 
Therefore, $h$ is injective. Since~$N$ is compact and normal, and $H$ is 
closed in $\widetilde  G\hspace{-1pt}$,  the subgroup $NH$ is closed in 
$\widetilde G\hspace{-1pt}$, and so $NH$ is locally compact. The domain
\mbox{$N \hspace{-2pt}\times\hspace{-0.5pt} \mathbb{R}$} of $h$ is 
also locally compact and $\sigma$-compact. Consequently, by the Open 
Mapping Theorem, $h$ is open  (cf.~\cite[5.29]{HewRos}). Hence, 
$h$ is a topological isomorphism.

The subgroup $NH$~is $G_\delta\mbox{-}$open in~$\widetilde 
G\hspace{-1pt}$, because it  contains the $G_\delta$-set $N\hspace{-2pt}$. 
By  Theorem~\ref{prel:thm:lcps}, $G$ is $G_\delta$-dense in $\widetilde 
G\hspace{-1pt}$.  By Theorem~\ref{prel:thm:delta}(a), the 
$G_\delta\mbox{-}$topology  is  a group topology on~$\widetilde 
G\hspace{-1pt}$,~and so  Lemma~\ref{herLPS:lemma:dense} is applicable. 
Consequently, by Lemma~\ref{herLPS:lemma:dense},
\mbox{$P \hspace{-2pt}:= \hspace{-2pt} NH \hspace{-2pt}\cap\hspace{-2pt} 
G$} is $G_\delta$-dense in $NH\hspace{-0.5pt}$; in particular, $P$ is 
dense in~$NH\hspace{-1pt}$, and by Theorem~\ref{prel:thm:comp},
\mbox{$\widetilde P \hspace{-2pt} = \hspace{-2pt} NH\hspace{-1pt}$}.
Since $P$ is a closed subgroup of $G\hspace{-1pt}$, it is
hereditarily locally pseudocompact. Thus,
\mbox{$P^\prime \hspace{-2pt}:= \hspace{-2pt} h^{-1}(P)$} 
satisfies the conditions of Proposition~\ref{herLPS:prop:NxR}. 
Consequently,
\mbox{$H \hspace{-2pt} = \hspace{-1pt}
h(\{e\}\hspace{-2.5pt}\times \hspace{-1.6pt} \mathbb{R})
\hspace{-2pt}\subseteq \hspace{-2pt}h(P^\prime)\hspace{-2pt}
\subseteq \hspace{-2pt} G$\hspace{-1pt}}. Hence, $G_0$ is non-trivial, as 
desired.
\end{proof}

\section{Proof of Theorems~\ref{thm:main:coarser} and 
\ref{thm:main:totminLPS}}

\label{sect:short}

Recall that a group topology is {\itshape linear} if it admits a base at 
the identity consisting of subgroups. Since every open subgroup is also 
closed (cf.~\cite[5.5]{HewRos} and~\cite[1.10(c)]{GLCLTG}), every linear 
group topology is zero-dimensional. We prove a slightly stronger version 
of Theorem~\ref{thm:main:coarser}.

\begin{Ltheorem*}[\ref{thm:main:coarser}$'$] \mbox{ }

\begin{myalphlist}

\item
Every locally pseudocompact totally disconnected group admits a coarser
linear group topology.

\item
Every minimal, locally pseudocompact, totally disconnected group  
has a linear topology, and thus it is strongly Vedenissoff.

\end{myalphlist}
\end{Ltheorem*}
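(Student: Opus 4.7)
The plan is to take the family of all open subgroups of $G$ as a neighborhood base at the identity for a coarser linear group topology. For part~(a), the key input would be Theorem~\ref{prel:thm:connsum}(a), which gives $o(G) = q(G)$ for any locally pseudocompact group. Since $G$ is totally disconnected, $q(G) = \{e\}$, and therefore $o(G) = \{e\}$; in other words, the open subgroups of $G$ have trivial intersection.

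I would then set $\mathcal{O}$ to be the family of all open subgroups of $G$ and verify that $\mathcal{O}$ is a fundamental system of neighborhoods of the identity for a group topology $\tau_{\mathrm{lin}}$ on~$G$. This is routine: closure under finite intersections is automatic, and for every $H \in \mathcal{O}$ and $g \in G$ the conjugate $g^{-1}Hg$ lies in $\mathcal{O}$ (since conjugation is a homeomorphism) and satisfies $g(g^{-1}Hg)g^{-1} = H$, which handles the conjugation condition. Because $\bigcap \mathcal{O} = o(G) = \{e\}$, the resulting topology $\tau_{\mathrm{lin}}$ is Hausdorff; each $H \in \mathcal{O}$ is open in the original topology of $G$, so $\tau_{\mathrm{lin}}$ is coarser than the original topology; and $\tau_{\mathrm{lin}}$ is linear by construction. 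This completes part~(a).

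For part~(b), minimality forces the coarser Hausdorff topology $\tau_{\mathrm{lin}}$ obtained from~(a) to coincide with the original topology of $G$, so $G$ itself carries a linear topology. Since every linear group topology is zero-dimensional, $G$ is zero-dimensional; combined with $G_0 \subseteq q(G) = \{e\}$, this gives $G_0 = \{e\}$, whence $G/G_0 = G$ is zero-dimensional. Finally, Theorem~\ref{prel:thm:connsum}(a) gives $q(G) = o(G)$, and the general inclusions $q(G) \subseteq z(G) \subseteq o(G)$ then force $G_0 = q(G) = z(G) = o(G)$, so $G$ is strongly Vedenissoff. I do not anticipate any serious obstacle here: the whole argument distills to the equality $o(G) = q(G)$ from Theorem~\ref{prel:thm:connsum}(a) together with the elementary fact that the family of open subgroups generates a coarser linear group topology whenever its intersection is trivial.
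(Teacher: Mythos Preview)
Your proposal is correct and follows essentially the same approach as the paper: both use Theorem~\ref{prel:thm:connsum}(a) to conclude $o(G)=q(G)=\{e\}$, take the open subgroups as a neighborhood base for the coarser linear Hausdorff topology, and then invoke minimality for part~(b). You simply spell out the neighborhood-base verification and the strongly Vedenissoff conclusion in more detail than the paper does.
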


\begin{proof}
(a) Since $G$ is totally disconnected,  
\mbox{$q(G) \hspace{-2pt} = \hspace{-2pt} \{e\}$}, and since $G$ is 
locally pseudocompact,~by Theorem~\ref{prel:thm:connsum}(a), 
\mbox{$q(G) \hspace{-2pt} = \hspace{-2pt} o(G)$}. Thus,
\mbox{$o(G) \hspace{-2pt} = \hspace{-2pt} \{e\}$}. Therefore,
the family of open subgroups in $G$ forms a base at the identity 
for a (Hausdorff) group topology on $G\hspace{-1pt}$, and it is obviously 
coarser than the topology of $G\hspace{-1pt}$. Clearly, this topology is 
linear.

(b) follows from (a) and the definition of minimality.
\end{proof}

\begin{Ltheorem*}[\ref{thm:main:totminLPS}]
Let $G$ be a totally minimal locally pseudocompact group. Then  
\mbox{$G_0 \hspace{-2pt} = \hspace{-2pt} q(G)$} if and only if
$G/G_0$ is zero-dimensional, in which case
$G$ is strongly Vedenissoff.
\end{Ltheorem*}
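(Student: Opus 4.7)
The forward direction is immediate: if $G/G_0$ is zero-dimensional, then Theorem~\ref{prel:thm:connsum}(c) delivers $G_0 = q(G)$ at once. For the ``strongly Vedenissoff'' conclusion in the ``in which case'' clause, Theorem~\ref{prel:thm:connsum}(a) already gives $q(G) = z(G) = o(G)$ for every locally pseudocompact group, so all four functorial subgroups collapse to $G_0$, and the zero-dimensionality of $G/G_0$ is the remaining half of the definition.

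For the non-trivial converse, assume $G_0 = q(G)$. The plan is to exhibit $G_0$ as a dense subgroup of $(\widetilde G)_0$ and then appeal to Theorem~\ref{prel:thm:connsum}(c). Theorem~\ref{prel:thm:connsum}(a) identifies $q(G)$ with $(\widetilde G)_0 \cap G$, so our hypothesis reads $G_0 = (\widetilde G)_0 \cap G$, and the task reduces to showing that $(\widetilde G)_0 \cap G$ is dense in $(\widetilde G)_0$.

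This is precisely where the total minimality of $G$ is used, through the Dikranjan--Prodanov total minimality criterion (cf.~\cite{DikPro}): if a topological group $G$ is totally minimal and dense in a topological group $K$, then $N \cap G$ is dense in $N$ for every closed normal subgroup $N$ of $K$. The subgroup $(\widetilde G)_0$ is automatically closed and normal in $\widetilde G$, so applying the criterion to the canonical dense embedding $G \hookrightarrow \widetilde G$ with $N = (\widetilde G)_0$ yields exactly the required density, and Theorem~\ref{prel:thm:connsum}(c) completes the argument.

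The main obstacle is the clean invocation of the total minimality criterion; once it is in hand, the geometric content of the argument is essentially exhausted. An alternative route would try to deduce zero-dimensionality of $G/G_0$ by verifying the hypotheses of Theorem~\ref{thm:main:coarser}(b) on the quotient: total minimality is inherited by Hausdorff quotients, and total disconnectedness of $G/G_0$ under our assumption is short (clopen subsets of $G$ are saturated over $G_0$ since each $G_0$-coset is connected, so $q(G/G_0) = q(G)/G_0$, which is trivial). However, establishing local pseudocompactness of $G/G_0$ is awkward because the quotient map need not be closed when $G_0$ is non-compact, so pseudocompact preimages of identity neighborhoods need not descend to sets with pseudocompact closure; the criterion-based route sidesteps this difficulty entirely.
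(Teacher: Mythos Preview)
Your argument is correct, but it follows a different route from the paper's. You invoke the Dikranjan--Prodanov total minimality criterion (total minimality of $G$ forces total density of $G$ in $\widetilde G$) to conclude that $G_0 = q(G) = (\widetilde G)_0 \cap G$ is dense in $(\widetilde G)_0$, and then Theorem~\ref{prel:thm:connsum}(c) delivers zero-dimensionality of $G/G_0$. The paper instead takes precisely the ``alternative route'' you sketch and then set aside: it observes that $G/G_0 = G/q(G)$ is minimal (as a Hausdorff quotient of a totally minimal group), locally pseudocompact, and totally disconnected, and applies Theorem~\ref{thm:main:coarser}(b) directly. Your worry about local pseudocompactness of the quotient is unfounded: Hausdorff quotients of locally pseudocompact groups are again locally pseudocompact, since for closed normal $N\hspace{-2pt}\leq\hspace{-2pt} G$ the group $G/N$ embeds as a $G_\delta$-dense subgroup of the locally compact group $\widetilde G/\operatorname{cl}_{\widetilde G} N$ and Theorem~\ref{prel:thm:lcps} applies; the paper treats this as routine (and uses the analogous closure property explicitly at the start of~\S\ref{sect:herLPS}). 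Your approach trades the dependence on Theorem~\ref{thm:main:coarser} for the total-density characterization from~\cite{DikPro}, which is a clean external shortcut; the paper's approach keeps the argument self-contained within the results already established in the article.
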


\begin{proof}
Suppose that \mbox{$G_0 \hspace{-2pt} = \hspace{-2pt} q(G)$}.
Then the quotient \mbox{$G/G_0 \hspace{-2pt} = \hspace{-2pt} G/q(G)$} is 
minimal, locally pseudocompact, and totally disconnected. Thus, by
Theorem~\ref{thm:main:coarser}(b), $G/G_0$ is zero-dimensional. By 
Theorem~\ref{prel:thm:connsum}(a),
\mbox{$q(G) \hspace{-2pt} = \hspace{-2pt} o(G)$}, and in particular,
\mbox{$z(G) \hspace{-2pt} = \hspace{-2pt} o(G)$}. Therefore, $G$ is 
strongly Vedenissoff, as required. 
The converse follows by Theorem~\ref{prel:thm:connsum}(c).
\end{proof}

\section{Proof of Theorem~\ref{thm:main:example}}

\label{sect:exmp}

\begin{Ltheorem*}[\ref{thm:main:example}]
For every natural number $n$ or \mbox{$n\hspace{-2pt} =\hspace{-1pt} 
\omega$}, there exists an abelian pseudocompact group $G_n$ such that 
$G_n$ is perfectly totally minimal, hereditarily disconnected, but
\mbox{$\dim G_n \hspace{-2pt} = \hspace{-1pt}n$}.
\end{Ltheorem*}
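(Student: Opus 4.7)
The plan is to deduce Theorem~\ref{thm:main:example} from a general embedding theorem that realizes a compact connected abelian group of dimension $n$ as the quasi-component of a pseudocompact group which inherits perfect total minimality while being hereditarily disconnected. The seed group is $K_n = \mathbb{T}^n$ for finite $n$ and $K_\omega = \mathbb{T}^\omega$; each is compact abelian, connected, of dimension $n$, and perfectly totally minimal (as is any compact Hausdorff abelian group).

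I would then construct the target group $G_n$ as a $G_\delta$-dense subgroup of the compact abelian group $L = K_n \times F$, where $F$ is a suitably large compact zero-dimensional abelian group. By Theorem~\ref{prel:thm:connsum}(a), $q(G_n) = L_0 \cap G_n = (K_n \times \{e\}) \cap G_n$, so the quasi-component of $G_n$ is precisely the intersection of $G_n$ with the connected factor. The selection of $G_n$ should achieve three things simultaneously: (i) $G_\delta$-density in $L$, giving pseudocompactness by Theorem~\ref{prel:thm:lcps}; (ii) hereditary disconnectedness of the subgroup $G_n \cap L_0$ in the subspace topology from $L$, so that $(G_n)_0 \subseteq q(G_n)$ is trivial; and (iii) an essentiality condition (of Prodanov--Stoyanov type, together with a stronger variant needed for products) ensuring perfect total minimality. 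Point (ii) is modelled on the Comfort--van Mill construction of hereditarily disconnected pseudocompact groups of arbitrary dimension and is realized through a transfinite enumeration of the relevant $G_\delta$-subsets of $L$, adjoining at each stage witnesses that lie outside $L_0$ except at the identity.

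Granted such $G_n$, verification is routine: pseudocompactness follows from Theorem~\ref{prel:thm:lcps}; $\dim G_n = \dim L = \dim K_n = n$ by the standard dimension preservation theorem for $G_\delta$-dense subgroups of compact groups; hereditary disconnectedness follows because $(G_n)_0 \subseteq q(G_n) = G_n \cap L_0$ and the latter is hereditarily disconnected by design; and the essentiality arrangement, properly set up, descends to all quotients and persists under products with arbitrary totally minimal groups, yielding perfect total minimality. Note that $q(G_n)$ must be non-trivial: otherwise $G_n$ would be totally disconnected, and by Theorem~\ref{thm:main:coarser}(b) it would then be zero-dimensional, contradicting $\dim G_n = n$.

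The main obstacle is the embedding construction itself. The subgroup $G_n \cap L_0$ must be thin enough to be hereditarily disconnected inside the connected group $L_0$, yet $G_n$ as a whole must be $G_\delta$-dense in $L$ and rich enough to be perfectly totally minimal. A transfinite recursion along a well-ordering of the $G_\delta$-subsets of $L$ and of the closed subgroups of $L$ (together with the corresponding data for quotients of $L$, needed for total minimality) must, at each step, select witnesses that avoid introducing new connected pieces into $G_n \cap L_0$ while securing both density and essentiality. Once such an embedding theorem is established, Theorem~\ref{thm:main:example} follows by applying it to $K_n$.
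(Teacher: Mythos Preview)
Your plan has an internal inconsistency and leaves the decisive step undone. You open by saying the embedding theorem should realize the compact \emph{connected} group $K_n=\mathbb{T}^n$ as the quasi-component, yet you also require $q(G_n)=G_n\cap L_0$ to be hereditarily disconnected so that $(G_n)_0$ is trivial; these are incompatible, since a connected $q(G_n)$ would force $(G_n)_0=q(G_n)\neq\{e\}$. What is actually needed is that $q(G_n)$ be a hereditarily disconnected \emph{dense subgroup} of $\mathbb{T}^n$, with $\mathbb{T}^n$ merely its completion. More seriously, you explicitly flag the transfinite recursion as ``the main obstacle'' and then do not carry it out; a plan that names its hardest step without performing it is not yet a proof, and you give no argument that the three requirements (i)--(iii) can be met simultaneously.

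The paper takes an entirely different, explicit route that avoids recursion. It proves a general embedding result (Theorem~\ref{thm:main:example}$'$): given a precompact abelian $A$ with $\widetilde A=C$ connected compact, one constructs a pseudocompact $G$ with $q(G)\cong A$ and $(\widetilde G)_0\cong C$, and $G$ inherits the minimality properties of $A$. The construction is concrete. Lemma~\ref{exmp:lemma:kernel} supplies a zero-dimensional pseudocompact perfectly totally minimal group $H$ (assembled from products of $p$-adic integers and verified via Stoyanov's $wtd$ criterion, Theorem~\ref{exmp:thm:wtd}) with $r_0(\widetilde H/H)$ large; one then chooses a surjective homomorphism $h\colon\widetilde H\to C$ with $H\subseteq\ker h$ and sets $G=\Gamma_h+(\{0\}\times A)\subseteq\widetilde H\times C$. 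Pseudocompactness follows from $G_\delta$-density of the graph $\Gamma_h$ (Lemma~\ref{exmp:lemma:graph}); a direct computation gives $q(G)=(\widetilde G)_0\cap G=\{0\}\times A$; and perfect total minimality is inherited from the dense subgroup $H\times A$ via Remark~\ref{exmpl:rem:dense}. Theorem~\ref{thm:main:example} then follows by taking $A=(\mathbb{Q}/\mathbb{Z})^n$ and $C=\mathbb{T}^n$: since $wtd(\mathbb{T})=\mathbb{Q}/\mathbb{Z}$, the group $A$ is already both hereditarily disconnected and perfectly totally minimal. The key idea you are missing is to \emph{prescribe} the quasi-component in advance as this specific group, rather than attempt to manufacture disconnectedness and minimality simultaneously through a recursion.
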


In this section, we prove Theorem~\ref{thm:main:example} by establishing a 
general construction that allows one to ``realize" minimal abelian groups 
as quasi-components of minimal pseudocompact groups. A weaker version of 
Theorem~\ref{thm:main:example}, which provides totally minimal 
pseudocompact groups, was announced in~\cite[1.4.2]{Dikconcomp}.  The 
novelty of Theorem~\ref{thm:main:example}, in addition to its complete 
proof, is that we obtain {\itshape perfectly} totally minimal 
pseudocompact groups.

\begin{Ltheorem*}[\ref{thm:main:example}$'$]
Let $A$ be a precompact abelian group that is contained in a connected 
compact abelian group $C\hspace{-1pt}$. Then there exists a 
pseudocompact abelian group $G$ such that 
\mbox{$A \hspace{-2pt}\cong \hspace{-1.7pt} q(G)$} and
\mbox{$C \hspace{-2pt} \cong \hspace{-2pt}  (\widetilde G)_0$}, and in 
particular,
\mbox{$\dim G \hspace{-1.5pt} = \hspace{-1pt} \dim C\hspace{-1pt}$}.
Furthermore, if \mbox{$C\hspace{-2pt} = \hspace{-2pt} \widetilde A$} and

\begin{myalphlist}

\item
$A$ is minimal, then $G$ may be chosen to be  minimal;

\item
$A$ is totally minimal, then $G$ may be chosen to be totally 
minimal;

\item
$A$ is perfectly minimal, then $G$ may be chosen to be 
perfectly minimal;

\item
$A$ is perfectly totally minimal, then $G$ may be chosen to 
be perfectly totally minimal.

\end{myalphlist}
\end{Ltheorem*}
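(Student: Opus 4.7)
The plan is to realize $G$ as a $G_\delta$-dense subgroup of the compact abelian group $X := C \times N$, where $N$ is an auxiliary compact zero-dimensional abelian group of sufficiently large weight. Since $C$ is connected and $N$ is zero-dimensional, $X_0 = C \times \{0\} \cong C$. If $G \subseteq X$ is $G_\delta$-dense in $X$, contains $A \times \{0\}$, and satisfies $G \cap (C \times \{0\}) = A \times \{0\}$, then Theorem~\ref{prel:thm:lcps} yields that $G$ is pseudocompact with $\widetilde G = X$, and Theorem~\ref{prel:thm:connsum}(a) gives $q(G) = (\widetilde G)_0 \cap G = (C \times \{0\}) \cap G = A \times \{0\} \cong A$; consequently $(\widetilde G)_0 \cong C$ and $\dim G = \dim \widetilde G = \dim C$.

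To construct $G$, I would fix $N := \mathbb{Z}_2^{\kappa}$ for a cardinal $\kappa$ with $\kappa > 2^{w(C)}$ and proceed by transfinite recursion of length $\lambda \leq 2^{\kappa}$. Starting from $G_0 := A \times \{0\}$, I would enumerate $\{x_\alpha + H_\alpha : \alpha < \lambda\}$, a family of cosets of closed $G_\delta$-subgroups of $X$ forming a base for the $G_\delta$-topology; at each stage $\alpha$ I would pick $g_\alpha \in x_\alpha + H_\alpha$ whose $N$-coordinate $g_\alpha^{(2)}$ is $\mathbb{Z}_2$-linearly independent from $\{g_\beta^{(2)} : \beta < \alpha\}$, and set $G_{\alpha+1} := \langle G_\alpha, g_\alpha \rangle$. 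This independence forces $G_{\alpha+1} \cap (C \times \{0\}) = A \times \{0\}$ inductively, since any new element with vanishing $N$-coordinate would entail a nontrivial $\mathbb{Z}_2$-relation among the $g_\beta^{(2)}$. The choice is feasible because $\pi_2(x_\alpha + H_\alpha)$ is a coset of a closed $G_\delta$-subgroup of $N$ and so has cardinality at least $|N|/2^{\aleph_0} = 2^{\kappa}$, while the span of prior $g_\beta^{(2)}$ has cardinality at most $|\alpha| + \aleph_0 < 2^{\kappa}$. Setting $G := \bigcup_\alpha G_\alpha$ proves the unadorned statement. For parts (a)--(d), where $C = \widetilde A$, I would further enumerate all nonzero closed subgroups $L_\beta$ of $X$ and at dedicated stages require $g_\alpha \in L_\beta$; those $L_\beta$ contained in $C \times \{0\}$ require no action, since the essentiality of $A$ in $C$ (from minimality of $A$ via Prodanov--Stoyanov) already supplies a nonzero element of $L_\beta \cap (A \times \{0\})$. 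This yields (a). For (b), the stronger demand that $G \cap L_\beta$ be dense in $L_\beta$ is imposed similarly, using total density of $A$ in $C$; parts (c) and (d) invoke the characterization of perfect (total) minimality for abelian groups via essentiality/total density of products with arbitrary (totally) minimal groups $H$, and strengthen the inductive step correspondingly, exploiting the split form $\widetilde G = C \times N$ and the zero-dimensionality of $N$.

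The main obstacle is the simultaneous juggling in the transfinite construction of three competing constraints: $G_\delta$-density (for pseudocompactness), the quasi-component equation $G \cap (C \times \{0\}) = A \times \{0\}$, and essentiality or total density for the minimality transfer. Choosing $N$ of large enough weight is the technical device that keeps forbidden sets strictly smaller than available choices at every stage. I expect the perfect total minimality case (d) to be the most delicate: essentiality of $G$ must persist after multiplying by an arbitrary totally minimal abelian group $H$, which requires uniform control of closed subgroups of $(C \times N) \times \widetilde H$ across all possible $H$; this is where the essentiality of $A$ in $C$ and the profinite structure of $N$ must cooperate most subtly.
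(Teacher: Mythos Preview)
For the unadorned statement your transfinite recursion over $N=\mathbb{Z}_2^\kappa$ (with $\mathbb{Z}_2$ the $2$-adic integers, so that $N$ is torsion-free and the independence bookkeeping makes sense) is correct, but it is a genuinely different route from the paper's. The paper proceeds algebraically: it first manufactures a zero-dimensional \emph{perfectly totally minimal} pseudocompact group $H$ with $r_0(\widetilde H/H)\ge 2^{w(C)}$ (Lemma~\ref{exmp:lemma:kernel}), chooses a surjection $h\colon\widetilde H\to C$ with $H\subseteq\ker h$, and sets $G:=\Gamma_h+(\{0\}\times A)$ inside $\widetilde H\times C$. The graph $\Gamma_h$ is $G_\delta$-dense by Lemma~\ref{exmp:lemma:graph}, and $G\cap(\{0\}\times C)=\{0\}\times A$ is immediate from the definition. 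Your recursion buys generality of method; the paper's graph construction buys an explicit one-line description of $G$.

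Your plan for (a)--(d), however, has a real gap. To force essentiality of $G$ in $C\times N$ you propose to enumerate the nonzero closed subgroups $L_\beta$ and, when $\pi_2(L_\beta)\neq\{0\}$, choose $g_\alpha\in L_\beta$ with $g_\alpha^{(2)}$ independent from the earlier choices. But $\pi_2(L_\beta)$ can be as small as a single copy of $\mathbb{Z}_2$, of cardinality $2^{\aleph_0}$, while there are $2^\kappa$ such closed subgroups to process; once $|\alpha|\ge 2^{\aleph_0}$ the forbidden set may already exhaust $\pi_2(L_\beta)$, and no independent choice exists. The difficulty is strictly worse for total density in (b), and your remark that (c) and (d) ``strengthen the inductive step correspondingly'' does not explain how a single recursion can quantify over \emph{all} auxiliary (totally) minimal groups $H$.

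The paper avoids all of this. Because $H$ was built perfectly totally minimal and $C=\widetilde A$, the subgroup $H\times A$ sits inside $G$ and is dense in $\widetilde H\times C=\widetilde G$, hence dense in $G$. Perfect (total) minimality of $H$ makes $H\times A$ inherit whichever of the four minimality properties $A$ carries, and that property then passes up from the dense subgroup $H\times A$ to $G$ by Remark~\ref{exmpl:rem:dense}. So (a)--(d) drop out uniformly, with no enumeration of closed subgroups and no cardinality balancing; the entire burden is shifted to the one-time construction of $H$ in Lemma~\ref{exmp:lemma:kernel}.
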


Theorem~\ref{thm:main:example}$'$ follows a line of ``embedding" results, 
which state that certain (locally) precompact groups embed into (locally) 
pseudocompact groups as a particular (e.g., functorial) closed subgroup 
(cf.~\cite[2.1]{ComfvMill1}, \cite{Ursul1}, \cite[7.6]{ComfvMill2}, 
\cite{Ursul2}, \cite[3.6]{Dikconpsc}, and \cite[5.6]{ComfGL}). The novelty 
is that  minimality properties of the group $A$ are inherited by the group 
$G$ that is constructed. By the celebrated
Prodanov-Stoyanov  Theorem, every minimal abelian group is precompact
(cf.~\cite{ProdStoj} and~\cite{ProdStoj2}), and so  the condition that 
the group $A$ is precompact is not restrictive at all.

We first show how Theorem~\ref{thm:main:example} follows from 
Theorem~\ref{thm:main:example}$'$, and then proceed to proving the latter. 
To that end, we recall a characterization due to Stoyanov for groups that 
are not only perfectly totally minimal, but their powers have the same 
property too (cf.~\cite{Stoy}).
For an abelian topological group $G\hspace{-1pt}$, let $wtd(G)$ 
denote the subgroup of elements $x$ in $G$ for which there exists a 
positive integer $m$ such that for every sequence $\{k_n\}_{n=1}^\infty$ 
of integers, one has \mbox{$m^n k_n x \longrightarrow 0$} in $G\hspace{-1pt}$. 
In other words,
\begin{align*}
wtd(G):=\{ x \in G \mid \exists m > 0, \forall \{k_n\}_{n=1}^\infty \in 
\mathbb{N}^\omega, m^n k_n x 
\longrightarrow 0\}.
\end{align*}

\begin{ftheorem}[{\cite{Stoy}, \cite[6.1.18]{DikProSto}}] 
\label{exmp:thm:wtd}
Let $P$ be a precompact abelian group. Then $P^\lambda$ is perfectly 
totally minimal for every cardinal $\lambda$ if and only if 
\mbox{$wtd(\widetilde P)\hspace{-2pt} \subseteq \hspace{-2pt} 
P\hspace{-1pt}$}.
\end{ftheorem}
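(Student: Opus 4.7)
My plan has two components: a direct monotonicity observation for the $wtd$ functor under products, and an appeal to the single-cardinal characterization of perfect total minimality.

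The computational step is the inclusion $wtd(G^\lambda) \subseteq (wtd(G))^\lambda$, valid for every abelian topological group $G$ and every cardinal $\lambda$. This is immediate from the definition: if $(x_\alpha)_{\alpha < \lambda} \in wtd(G^\lambda)$ with witness $m > 0$, then for every integer sequence $\{k_n\}_{n=1}^\infty$ one has $m^n k_n (x_\alpha)_\alpha \to 0$ in the product topology, which forces $m^n k_n x_\alpha \to 0$ coordinatewise; so the same $m$ witnesses $x_\alpha \in wtd(G)$ for every coordinate $\alpha$. (The reverse inclusion may fail, since the coordinatewise witnesses $m_\alpha$ need not admit a common bound.)

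Granted the single-cardinal case---namely, that a precompact abelian group $Q$ is perfectly totally minimal if and only if $wtd(\widetilde Q) \subseteq Q$, as established by Stoyanov (cf.~\cite{Stoy}, \cite[6.1.18]{DikProSto})---the theorem follows quickly. The forward implication is immediate by specializing to $\lambda = 1$. For the converse, assume $wtd(\widetilde P) \subseteq P$ and fix $\lambda$. Since the completion commutes with direct products, $\widetilde{P^\lambda} = (\widetilde P)^\lambda$, and the monotonicity above yields $wtd(\widetilde{P^\lambda}) = wtd((\widetilde P)^\lambda) \subseteq (wtd(\widetilde P))^\lambda \subseteq P^\lambda$. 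Applying the single-cardinal characterization to $Q := P^\lambda$ now gives that $P^\lambda$ is perfectly totally minimal.

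The main obstacle is of course the single-cardinal characterization itself, which is the substantive content of Stoyanov's theorem; in the present context I would cite rather than reprove it. Conceptually, the $wtd$ subgroup is designed to capture precisely the obstruction to perfect total minimality: on the necessity side, an element $x \in wtd(\widetilde Q)$ lying outside $Q$ permits, via the asymptotic divisibility encoded in the sequences $m^n k_n$, the construction of a strictly coarser Hausdorff group topology on some product $Q \times H$ with $H$ totally minimal, contradicting perfect total minimality; the sufficiency direction uses the theory of essential and totally dense subgroups of compact abelian groups, combined with Pontryagin duality, to show that the $wtd$ inclusion forces essentiality of $Q \times H$ in its completion for every totally minimal $H$.
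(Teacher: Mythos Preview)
The paper does not prove this theorem: it is stated in the ``fact'' theorem style and attributed directly to Stoyanov, with no accompanying argument in the paper itself. Your proposal goes one small step further by reducing the ``for every cardinal $\lambda$'' formulation to the single-cardinal characterization via the coordinatewise inclusion $wtd(G^\lambda)\subseteq (wtd(G))^\lambda$ and the commutation of completion with products; this reduction is correct. But since you also cite rather than prove the single-cardinal case, there is no substantive comparison to make---both the paper and your proposal defer the real content to the literature, and your extra reduction step, while sound, is not something the paper attempts or needs.
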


\begin{proof}[Proof of Theorem~\ref{thm:main:example}.]
Put \mbox{$P\hspace{-2pt} := \hspace{-2pt} \mathbb{Q}/\mathbb{Z}$}. Then
\mbox{$\widetilde P\hspace{-2pt} = \hspace{-2pt} \mathbb{R}/\mathbb{Z}$} 
and 
\mbox{$wtd(\widetilde P)\hspace{-2pt} = \hspace{-2pt} \mathbb{Q}/\mathbb{Z} 
\hspace{-2pt} = \hspace{-2pt} P$}, and by Theorem~\ref{exmp:thm:wtd},
\mbox{$A_n\hspace{-2pt} := \hspace{-2pt} P^n$} is perfectly totally minimal 
for every natural number $n$ or 
\mbox{$n\hspace{-2pt} =\hspace{-1pt}\omega$}, and $A$ is contained in the 
connected compact group 
\mbox{$C_n\hspace{-2pt} := \hspace{-2pt} (\mathbb{R}/\mathbb{Z})^n$}.
By Theorem~\ref{thm:main:example}$'$(d), there exists
a~perfectly totally minimal pseudocompact group $G_n$ such that 
\mbox{$A_n \hspace{-2pt} \cong \hspace{-2pt} q(G_n)$} and 
\mbox{$\dim G_n\hspace{-2pt} = \hspace{-1pt} \dim C_n 
\hspace{-2pt} = \hspace{-1pt} n$}. Since
\mbox{$(G_n)_0 \hspace{-2pt} \subseteq \hspace{-2pt} q(G_n)_0
\hspace{-2pt}\cong \hspace{-2pt} (A_n)_0 
\hspace{-2pt} = \hspace{-2pt} \{0\}$}, the group $G_n$ is hereditarily 
disconnected, as desired.
\end{proof}

We proceed now to proving Theorem~\ref{thm:main:example}$'$. The proof 
has two ingredients: A zero-dimensional pseudocompact group $H$ 
with good minimality properties, and a discontinuous homomorphism
\mbox{$h\colon \widetilde H \rightarrow C$} with kernel $H\hspace{-1pt}$. 
The desired group $G$ will be the sum the of graph of $h$ and the group 
$A$ formed in the product 
\mbox{$H\hspace{-2.5pt} \times \hspace{-2.25pt} C\hspace{-1.5pt}$}.

\begin{lemma} \label{exmp:lemma:kernel}
For every infinite cardinal $\lambda$, there exists a 
pseudocompact zero-dimensional group $H$ such that:

\begin{myromanlist}

\item
$H$ is perfectly totally minimal;

\item
\mbox{$r_0(\widetilde H / H) \hspace{-2pt} \geq 
\hspace{-1pt}  2^\lambda$}.
\end{myromanlist}

\end{lemma}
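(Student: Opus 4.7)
The plan is to realize $\widetilde H$ as a large power of a concrete compact zero-dimensional group with explicit $wtd$-subgroup, and to take $H$ to be the sum of Stoyanov's $wtd(\widetilde H)$ (supplying perfect total minimality via Theorem~\ref{exmp:thm:wtd}) with a $\Sigma$-product (supplying pseudocompactness).

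Concretely, I would let $L := \prod_{p} \mathbb{Z}/p\mathbb{Z}$ (product over all primes), which is a compact, zero-dimensional, metrizable abelian group, and set $\widetilde H := L^{2^\lambda}$. A direct inspection of the definition of $wtd$ gives $wtd(L) = \bigoplus_{p} \mathbb{Z}/p\mathbb{Z}$: for any prime $p$ not dividing the witness $m$, the integer $m$ acts as a unit on $\mathbb{Z}/p\mathbb{Z}$, so taking $k_n \equiv 1$ forces the $p$-component of $x$ to vanish. Similarly, $wtd(\widetilde H)$ consists of those tuples $(x_\alpha)_{\alpha < 2^\lambda}$ supported \emph{uniformly} on finitely many primes. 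Define
\[
H := wtd(\widetilde H) + \Sigma(\widetilde H),
\]
where $\Sigma(\widetilde H)$ is the subgroup of elements of $\widetilde H$ with countable support. Every nonempty $G_\delta$-subset $V$ of $\widetilde H$ is controlled by countably many coordinates $S \subseteq 2^\lambda$: given any $y \in V$, the element $x$ that agrees with $y$ on $S$ and vanishes off $S$ lies in $\Sigma(\widetilde H) \cap V$. Hence $\Sigma(\widetilde H) \subseteq H$ is $G_\delta$-dense in the compact group $\widetilde H$, and Theorem~\ref{prel:thm:lcps} then yields that $H$ is pseudocompact with completion $\widetilde H$; and $H$ is zero-dimensional because $\widetilde H$ is. Since $wtd(\widetilde H) \subseteq H$, Theorem~\ref{exmp:thm:wtd} gives that $H^\kappa$ is perfectly totally minimal for every cardinal $\kappa$, proving (i).

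For (ii), set $M := L/wtd(L)$; a direct verification shows that $M$ is a divisible, torsion-free abelian group of cardinality $2^\omega$. Consider the composite of natural homomorphisms
\[
\widetilde H = L^{2^\lambda} \twoheadrightarrow M^{2^\lambda} \twoheadrightarrow M^{2^\lambda}/\Sigma(M^{2^\lambda}).
\]
Every $x \in wtd(\widetilde H)$ has all coordinates in $wtd(L)$, so it maps to zero componentwise, while every $x \in \Sigma(\widetilde H)$ already has countable support. Therefore $H$ lies in the kernel of this composition, giving a surjection $\widetilde H/H \twoheadrightarrow M^{2^\lambda}/\Sigma(M^{2^\lambda})$. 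Since $M$ is divisible and torsion-free, so is the target, and $|\Sigma(M^{2^\lambda})| = 2^\lambda$ while $|M^{2^\lambda}| = 2^{2^\lambda}$, so the target is a $\mathbb{Q}$-vector space of dimension $2^{2^\lambda}$. Consequently $r_0(\widetilde H/H) \geq 2^{2^\lambda} \geq 2^\lambda$.

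The main delicate point is the simultaneous balancing of the three demands on $H$: $G_\delta$-density in $\widetilde H$, containment of $wtd(\widetilde H)$, and leaving a torsion-free quotient of rank at least $2^\lambda$. The choice $H = wtd(\widetilde H) + \Sigma(\widetilde H)$ resolves the tension because the $\Sigma$-summand contributes $G_\delta$-density at no rank cost (its image in $M^{2^\lambda}$ is precisely $\Sigma(M^{2^\lambda})$, and the resulting quotient still has full cardinality $2^{2^\lambda}$), while the $wtd$-summand vanishes in $M^{2^\lambda}$ automatically. The only alternative ingredient that requires care is the verification that $M = L/wtd(L)$ is divisible and torsion-free as an abstract group; a quick coordinatewise argument using $wtd(L) = \bigoplus_p \mathbb{Z}/p\mathbb{Z}$ handles this.
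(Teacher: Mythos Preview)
Your proof is correct and follows the same blueprint as the paper's: realize $H$ as $wtd(\widetilde H)+(\text{a }\Sigma\text{-product})$ inside a compact zero-dimensional abelian group built as a product indexed (in part) by the primes, invoke Stoyanov's criterion (Theorem~\ref{exmp:thm:wtd}) for (i), and exhibit a large torsion-free quotient for (ii). The differences lie in the concrete choices. The paper works with $N=\prod_{p}\mathbb{Z}_p^{\omega_1}$, sets $E=wtd(N)+\prod_p S_p$ (each $S_p$ the $\Sigma$-product in $\mathbb{Z}_p^{\omega_1}$), and then takes $H=E^\lambda$; for (ii) it shows $r_0(N/E)\geq 1$ by a one-line diagonal argument (every element of $E$ has at least one zero coordinate, so $E$ misses the diagonal copy of $\mathbb{Z}$) and then uses $\widetilde H/H\cong(N/E)^\lambda$. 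You instead use the more elementary building block $\mathbb{Z}/p\mathbb{Z}$, assemble $H$ in a single step inside $L^{2^\lambda}$, and for (ii) pass to the explicit $\mathbb{Q}$-vector space $M^{2^\lambda}/\Sigma(M^{2^\lambda})$, getting the stronger (though unneeded) bound $r_0(\widetilde H/H)\geq 2^{2^\lambda}$. Your route avoids the $p$-adics and is arguably more self-contained; the paper's two-stage construction trades that for a very short verification of~(ii).
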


\begin{proof}
Let $\mathbb{P}$ denote the set of prime integers, and for 
\mbox{$p\hspace{-2pt} \in \hspace{-2pt} \mathbb{P}$}, let
$\mathbb{Z}_p$ denote the group of $p$-adic integers.
Put \mbox{$N \hspace{-2pt} := \hspace{-2pt}
\prod\limits_{p\in \mathbb{P}} \hspace{-1pt}
\mathbb{Z}_p^{\omega_1}$}. We think of elements of $N$ as tuples
\mbox{$(x_{p,\alpha})$}, where 
\mbox{$p\hspace{-2pt}\in\hspace{-2pt}\mathbb{P}$} and 
\mbox{$\alpha \hspace{-2pt}<\hspace{-2pt} \omega_1$}.~We define three 
subgroups of $N$:

\begin{mynumlist}

\item
\mbox{$E_1\hspace{-2pt} :=\hspace{-2pt}
\bigoplus\limits_{p\in \mathbb{P}} 
\hspace{-1pt}\mathbb{Z}_p^{\omega_1}\hspace{-1.5pt}$} consists of 
elements $x$ such that  
\mbox{$(\exists \alpha)(x_{p,\alpha}\hspace{-2pt}\neq\hspace{-2pt} 0)$} 
only for finitely many primes $p$ (or equivalently,
\mbox{$E_1\hspace{-2pt} = \hspace{-2pt} wtd(N)$});


\item
\mbox{$E_2\hspace{-2pt} :=
\hspace{-2pt} \prod\limits_{p\in \mathbb{P}}
\hspace{-2pt} S_p$}, where  $S_p$ is the $\Sigma$-product of 
$\omega_1$-many copies of $\mathbb{Z}_p$ 
(or equivalently, $E_2$ 
consists of elements  $x$ such that all but countably many coordinates 
$x_{p,\alpha}$ of $x$ are zero);

\item
\mbox{$E \hspace{-2pt} := \hspace{-2pt} 
E_1 \hspace{-1pt} + \hspace{-1pt} E_2$}.

\vspace{6pt}

\end{mynumlist}
We claim that \mbox{$H \hspace{-2pt}: = \hspace{-2pt} E^\lambda$} has the
desired properties.

The group $E$ is $G_\delta$-dense in $N\hspace{-1.5pt}$, because it 
contains $E_2$, which is clearly $G_\delta$-dense. Thus, $H$~is 
$G_\delta$-dense in the compact group $N^\lambda\hspace{-1.5pt}$, and 
in particular, by Theorem~\ref{exmp:thm:wtd}, 
\mbox{$\widetilde H \hspace{-2pt} = \hspace{-2pt} 
N^\lambda\hspace{-1.5pt}$}. Therefore, by Theorem~\ref{prel:thm:lcps},
$H$ is pseudocompact.  By Theorem~\ref{prel:thm:connsum}(b), $H$ is 
zero-dimensional, because  \mbox{$N^\lambda\hspace{-0.5pt}$} is so.

Since $E$ is $G_\delta$-dense in the compact group $N\hspace{-1.5pt}$,
in particular, it is dense, and by  Theorem~\ref{prel:thm:comp},
\mbox{$N\hspace{-2pt} = \hspace{-2pt} \widetilde E$}. Thus,
\mbox{$wtd(\widetilde E)\hspace{-2pt} = \hspace{-2pt}
wtd(N) \hspace{-2pt} = \hspace{-2pt} E_1 
\hspace{-2pt} \subseteq \hspace{-2pt} E$}, and therefore by 
Theorem~\ref{exmp:thm:wtd}, 
\mbox{$H\hspace{-2pt} = \hspace{-2pt} E^\lambda$} is perfectly totally 
minimal.

In order to prove that 
\mbox{$r_0(\widetilde H / H) \hspace{-2pt} \geq
\hspace{-1pt}  2^\lambda$}, it suffices to show that
\mbox{$r_0(N / E) \hspace{-2pt} \geq \hspace{-1pt}  1$}, as
\mbox{$\widetilde H / H \hspace{-2pt}
\cong \hspace{-2pt} (N/E)^\lambda$}.
Let $\Delta$ denote the ``diagonal" subgroup of $N\hspace{-1.5pt}$, that 
is, the subgroup generated by $d$ such that 
\mbox{$d_{p,\alpha}\hspace{-2pt} = \hspace{-2pt}1$} for every $p$ and 
$\alpha$, and we prove that \mbox{$E\hspace{-2pt} \cap \hspace{-2pt} \Delta 
\hspace{-2pt} = \hspace{-2pt} \{0\}$}. In fact, we show a bit more, 
namely, that every element in $E$ has at least one zero 
coordinate. Let 
\mbox{$x\hspace{-2pt}=\hspace{-2pt} y 
\hspace{-1pt}+ \hspace{-1pt} z \hspace{-2pt} \in \hspace{-2pt} E$}, 
where \mbox{$y \hspace{-2pt}\in\hspace{-2pt} E_1$} and
\mbox{$z \hspace{-2pt}\in\hspace{-2pt} E_2$}. By the definition of $E_1$, 
there exists \mbox{$q \hspace{-2pt}\in\hspace{-2pt} \mathbb{P}$} such that 
\mbox{$y_{q,\alpha}\hspace{-2pt} = \hspace{-2pt} 0$} for every 
\mbox{$\alpha\hspace{-2pt} < \hspace{-2pt}\omega_1$}. 
Since \mbox{$z \hspace{-2pt}\in\hspace{-2pt} E_2$}, all but countably 
many 
coordinates of $z$ are zero. In particular, there exists 
\mbox{$\gamma\hspace{-2pt} < \hspace{-2pt}\omega_1$} such that 
\mbox{$z_{q,\gamma}\hspace{-2pt} = \hspace{-2pt} 0$}. Therefore,
\mbox{$x_{q,\gamma}\hspace{-2pt}=\hspace{-2pt} y_{q,\gamma}
\hspace{-1pt}+ \hspace{-1pt} z_{q,\gamma} 
\hspace{-2pt} = \hspace{-2pt} 0$}. Hence,
\mbox{$E\hspace{-2pt} \cap \hspace{-2pt}\Delta
\hspace{-2pt} = \hspace{-2pt} \{0\}$} and
\mbox{$r_0(N / E) \hspace{-2pt} \geq \hspace{-1pt}  1$}, as desired.
\end{proof}

We consider the next lemma part of the folklore of pseudocompact 
abelian groups (cf.~\cite[3.6, 3.10]{ComfGal2}), and we provide its proof 
only for the sake of completeness.

\begin{flemma} \label{exmp:lemma:graph}
Let $K_1$ and $K_2$ be compact topological groups, and let
\mbox{$h\colon K_1 \hspace{-2pt}\rightarrow\hspace{-2pt} K_2$} be a 
surjective homomorphism such that $\ker h$ is $G_\delta$-dense in $K_1$. 
Then the graph $\Gamma_h$ of $h$ is a $G_\delta$-dense 
subgroup of the product \mbox{$K_1\hspace{-2pt} \times \hspace{-2.25pt} K_2$},
and in particular, $\Gamma_h$ is pseudocompact.
\end{flemma}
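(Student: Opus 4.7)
The plan is to prove that $\Gamma_h$ is $G_\delta$-dense in $K_1\times K_2$, and then to harvest pseudocompactness directly from Theorem~\ref{prel:thm:lcps}: $\Gamma_h$ is a subgroup of the compact group $K_1\times K_2$, hence precompact, and once $G_\delta$-density is established its closure (hence its completion) equals $K_1\times K_2$, so all hypotheses of Theorem~\ref{prel:thm:lcps} will be in place and pseudocompactness of $\Gamma_h$ will follow immediately.

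First I would check that the rectangles $U\times V$, with $U$ a non-empty $G_\delta$-open subset of $K_1$ and $V$ a non-empty $G_\delta$-open subset of $K_2$, form a base for the $G_\delta$-topology on the product. Given a $G_\delta$-set $W=\bigcap_{n<\omega}W_n$ and a point $(x,y)\in W$, choose basic open rectangles $A_n\times B_n\subseteq W_n$ with $(x,y)\in A_n\times B_n$; then $(x,y)\in(\bigcap_n A_n)\times(\bigcap_n B_n)\subseteq W$. Consequently, showing $G_\delta$-density of $\Gamma_h$ in $K_1\times K_2$ reduces to meeting every non-empty rectangle $U\times V$ of this form.

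The core of the argument is a translation trick using the hypothesis that $\ker h$ is $G_\delta$-dense. Given such $U$ and $V$, pick $v\in V$ and, using surjectivity of $h$, lift it to some $x_0\in K_1$ with $h(x_0)=v$. By Theorem~\ref{prel:thm:delta}(a), the $G_\delta$-topology on $K_1$ is itself a group topology, so the left translate $x_0^{-1}U$ is again non-empty and $G_\delta$-open. By the assumed $G_\delta$-density of $\ker h$ in $K_1$, there exists $k\in\ker h\cap x_0^{-1}U$; then $x_0k\in U$ and $h(x_0k)=h(x_0)h(k)=v\in V$, so $(x_0k,v)=(x_0k,h(x_0k))\in\Gamma_h\cap(U\times V)$, as required.

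The only mildly delicate point is verifying the rectangle base for the $G_\delta$-topology on $K_1\times K_2$; after that, the proof is essentially bookkeeping on top of Theorem~\ref{prel:thm:delta}(a), the hypothesis on $\ker h$, and Theorem~\ref{prel:thm:lcps}. No compactness of $K_1\times K_2$ is used beyond ensuring $\Gamma_h$ is precompact and that its completion is the ambient product group.
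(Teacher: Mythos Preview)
Your proof is correct and follows essentially the same route as the paper: reduce to non-empty $G_\delta$-rectangles $U\times V$, lift a point of $V$ via surjectivity of $h$, translate $U$ (using that the $G_\delta$-topology is a group topology), and hit the translate with $\ker h$ by $G_\delta$-density; pseudocompactness then follows from Theorem~\ref{prel:thm:lcps}. The only difference is that you spell out why $G_\delta$-rectangles form a base for the $G_\delta$-topology on the product, which the paper simply assumes without loss of generality.
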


\begin{proof}
Let $B$ be a non-empty $G_\delta$-subset of 
\mbox{$K_1\hspace{-2pt} \times \hspace{-2.25pt} K_2$}. Without loss of 
generality, we may assume that $B$ is of the form
\mbox{$B_1\hspace{-2pt} \times \hspace{-2.25pt} B_2$}, where $B_i$ is a 
$G_\delta$-set in $K_i$. Pick 
\mbox{$x_2 \hspace{-2pt} \in \hspace{-2pt} B_2$}. Since $h$ is surjective, 
there is \mbox{$x_1^\prime\hspace{-2pt} \in \hspace{-2pt} K_1$} such that
\mbox{$h(x_1)\hspace{-2pt}=\hspace{-2pt}x_2$}. The translate
$B_1x_1^{-1}$ is a non-empty $G_\delta$-set in $K_1$, and thus we may pick
\mbox{$x_0 \hspace{-2pt} \in \hspace{-2pt} 
B_1x_1^{-1} \hspace{-2pt} \cap\hspace{-1pt} \ker h$}, because
$\ker h$ is $G_\delta$-dense in $K_1$. Since
\mbox{$h(x_0x_1)\hspace{-2pt} = \hspace{-2pt}h(x_1)
\hspace{-2pt} = \hspace{-2pt}x_2$}, one obtains that
\mbox{$(x_0 x_1,x_2) \hspace{-2pt} \in \hspace{-2pt}
\Gamma_h \hspace{-2pt}\cap \hspace{-2pt}
(B_1\hspace{-2pt} \times \hspace{-2.25pt} B_2)$}. This shows that 
$\Gamma_h$ meets every $G_\delta$-set in 
\mbox{$K_1\hspace{-2pt} \times \hspace{-2.25pt} K_2$}. Therefore, by 
Theorem~\ref{prel:thm:comp}, 
\mbox{$K_1\hspace{-2pt} \times \hspace{-2.25pt} K_2$} is the completion of 
$\Gamma_h$. Hence, by Theorem~\ref{prel:thm:lcps}, $\Gamma_h$ is 
pseudocompact.
\end{proof}

A last, auxiliary tool in the proof of Theorem~\ref{thm:main:example}$'$ 
is the following observation.

\begin{remark} \label{exmpl:rem:dense}
Let $\mathcal{P}$ denote one of the following properties: minimal, totally 
minimal, perfectly minimal, perfectly totally minimal. If $G$ contains a 
dense subgroup with property $\mathcal{P}\hspace{-1.5pt}$, then $G$  also 
has property $\mathcal{P}\hspace{-2pt}$ 
(cf.~\cite[Theorem~2]{Steph}, \cite{Prod1}, 
\cite[Propositions~1~and~2]{Banasch}, \cite{DikPro}, 
\cite[2.5.1, 4.3.3]{DikProSto}, and \cite[3.21, 3.23]{GLCLTG}). 
\end{remark}

\begin{proof}[Proof of Theorem~\ref{thm:main:example}$'$.]
Put \mbox{$\lambda \hspace{-2pt} = \hspace{-2pt} w(C)$}, and let $H$ be 
the 
group provided by Lemma~\ref{exmp:lemma:kernel}. Since
\mbox{$r_0(\widetilde H / H) \hspace{-2pt} \geq \hspace{-1pt}  2^\lambda$},
the quotient \mbox{$\widetilde H / H$} contains a free abelian group 
$F$ of rank $2^\lambda$. 
As \mbox{$|C|\hspace{-2pt} \leq \hspace{-2pt}2^{\lambda}$}, one may
pick a surjective homomorphism \mbox{$h_1\colon F \rightarrow C\hspace{-1pt}$}.
The group $C$ is divisible, because it is compact and connected 
(cf.~\cite[24.25]{HewRos}). Thus, $h_1$ can be extended to a surjective 
homomorphism
 \mbox{$h_2\colon \widetilde H / H \rightarrow C\hspace{-1pt}$}. 

Let \mbox{$h\colon \widetilde H \rightarrow C\hspace{-1pt}$} denote 
the composition of $h_2$ with the canonical projection
\mbox{$\widetilde H  \rightarrow \widetilde H / H\hspace{-1pt}$}.
By Theorem~\ref{prel:thm:lcps}, $H$ is $G_\delta$-dense in 
$\widetilde H\hspace{-1.5pt}$, 
because $H$ is pseudocompact. Thus, $\ker h$ is $G_\delta$-dense in 
\mbox{$\widetilde H\hspace{-1.5pt}$}, because  
\mbox{$H \hspace{-2pt} \subseteq \hspace{-2pt} \ker h$}. Clearly, $h$ is 
surjective. Therefore, by Lemma~\ref{exmp:lemma:graph}, the graph
$\Gamma_h$ of $h$ is $G_\delta\mbox{-}$dense in the product
\mbox{$\widetilde H\hspace{-2pt}\times \hspace{-1.5pt} C\hspace{-1pt}$}.

Put \mbox{$G \hspace{-2pt} := \hspace{-2pt} 
\Gamma_h \hspace{-1pt}+ \hspace{-1pt}(\{0\} 
\hspace{-2pt}\times\hspace{-2pt} A)$}. Since $\Gamma_h$ is 
$G_\delta$-dense in \mbox{$H\hspace{-2pt}\times \hspace{-1.5pt} C$} and 
contained in $G\hspace{-1pt}$, the group $G$ is $G_\delta\mbox{-}$dense 
too.  Thus,  by Theorem~\ref{prel:thm:comp}, 
\mbox{$\widetilde G \hspace{-2pt} = \hspace{-2pt}
\widetilde H\hspace{-2pt}\times \hspace{-1.5pt} C\hspace{-1pt}$}, and  by 
Theorem~\ref{prel:thm:lcps}, $G$ is pseudocompact. As $H$ is 
zero-dimensional, 
\mbox{$(\widetilde G)_0 \hspace{-2pt} = \hspace{-2pt} 
\{0\} \hspace{-2pt} \times \hspace{-2pt} C\hspace{-1pt}$}, and by 
Theorem~\ref{prel:thm:connsum}(a), 
\mbox{$q(G)\hspace{-2pt} = 
\hspace{-2pt} (\widetilde G)_0 \hspace{-2.1pt} \cap\hspace{-2pt} G
\hspace{-2pt} = \hspace{-2pt} 
\{0\} \hspace{-2pt} \times \hspace{-2pt} A$}. 

We check now that \mbox{$\dim G \hspace{-1.5pt} = 
\hspace{-1pt} \dim C\hspace{-1pt}$}. Since $G$ is pseudocompact, by 
Theorem~\ref{prel:thm:lcps}, 
\mbox{$\widetilde G \hspace{-2pt} = \hspace{-2pt} \beta 
G\hspace{-1pt}$},~and so 
\mbox{$\dim G \hspace{-1.5pt} = \hspace{-1pt} \dim \beta G  
 \hspace{-1.5pt} = \hspace{-1pt} \dim \widetilde G\hspace{-1pt}$} 
(cf.~\cite[7.1.17]{Engel6}).
As $H$ is zero-dimensional and pseudocompact, by 
Theorem~\ref{prel:thm:connsum}(b), 
\mbox{$\dim\widetilde  H \hspace{-2pt} = \hspace{-1pt} 0$}.
Thus, by Yamanoshita's Theorem, 
\mbox{$\dim \widetilde G \hspace{-1.5pt} = \hspace{-1pt}
\dim \widetilde H \hspace{-1pt} + \hspace{-1pt} \dim C 
\hspace{-1.5pt} = \hspace{-1pt}  \dim C$}
(cf.~\cite{Yamanoshita}, \cite[Corollary~2]{MostertDim},
and~\cite[3.3.12]{DikProSto}). Therefore,  
\mbox{$\dim G \hspace{-1.5pt} =\hspace{-1pt} \dim C\hspace{-1pt}$}.

We turn to minimality properties of $G\hspace{-1pt}$. 
Suppose that  \mbox{$C\hspace{-2pt} = \hspace{-2pt} \widetilde A$}.
The group $G$ always 
contains the product \mbox{$H\hspace{-2pt}\times \hspace{-2.5pt} A$}, but
in this case, 
\mbox{$H\hspace{-2pt}\times \hspace{-2.5pt} A$} is dense in 
\mbox{$\widetilde G \hspace{-2pt} = \hspace{-2pt}
\widetilde H\hspace{-2pt}\times \hspace{-1.5pt} C\hspace{-1pt}$}, and thus 
it is dense in $G\hspace{-1pt}$. Therefore,~by 
Remark~\ref{exmpl:rem:dense}, $G$ inherits all minimality properties of 
\mbox{$H\hspace{-2pt}\times \hspace{-2.5pt} A$}. Since $H$ is perfectly 
totally minimal, the product 
\mbox{$H\hspace{-2pt}\times \hspace{-2.5pt} A$} inherits all minimality 
properties of $A$. This shows (a)-(d).
\end{proof}

One wonders whether the condition 
\mbox{$C\hspace{-2pt} = \hspace{-2pt} \widetilde A$} is necessary 
for parts (a)-(d) of Theorem~\ref{thm:main:example}$'$. If the resulting 
group $G$ is to be totally minimal, then the answer is positive. Dikranjan 
showed that if $G$ is a minimal pseudocompact abelian group then $q(G)$ is 
dense in $(\widetilde G)_0$  if and only if $G/q(G)$ is minimal 
(cf.~\cite[1.7]{DikPS0dim}), in which case $(\widetilde G)_0$ is the 
completion of $q(G)$. This settles the question for (b) and (d).
The following remark settles the question for (a) and (c).

\begin{remark}
We note (without a proof) that 
the techniques of Theorem~\ref{thm:main:example}$'$ can also be used to 
construct, for every positive integer $n$ or 
\mbox{$n\hspace{-1pt}=\hspace{-1pt} \omega$}, a perfectly minimal 
pseudocompact $n$-dimensional group $G$ such that 
$G/q(G)$ is not minimal, and hence 
$q(G)$ is not dense in $(\widetilde G)_0$.
\end{remark}

\section{Concluding remarks}

One can also define the intersection $o^*(G)$ of all open {\itshape 
normal} subgroups of a group $G\hspace{-1pt}$, and ask about its 
relationship with the other four functorial subgroups. If a locally 
compact group $L$ admits a base at the identity consisting of 
neighborhoods that are invariant under conjugation (that is, $L$~is 
so-called {\itshape balanced} or admits {\itshape Small Invariant 
Neighborhoods}), which is the case for compact or abelian groups, then
\mbox{$o(L) \hspace{-2pt} = \hspace{-2pt} o^*(L)$}. There are, however, 
many locally compact groups that do not have this property.

\begin{examples} \mbox{ }

\begin{myalphlist}

\item
The semidirect product \mbox{$L \hspace{-2pt} :=\hspace{-2pt}
\{0,1\}^\mathbb{Z} \hspace{-2.5pt} \rtimes\hspace{-1.5pt} \mathbb{Z}$}, 
where  $\mathbb{Z}$ acts on the compact group 
\mbox{$K \hspace{-2pt} := \hspace{-2pt} \{0,1\}^\mathbb{Z}$} by 
shifts, is locally compact and zero-dimensional, and thus 
$o(L)$ is trivial. However, $K$ is the smallest open normal subgroup of 
$L$, and therefore \mbox{$o^*(L)\hspace{-2pt} = \hspace{-2pt} 
K\hspace{-1pt}$}.

\item
For \mbox{$p\hspace{-2pt} \in \hspace{-2pt} \mathbb{P}$}, let
$\mathbb{Q}_p$ denote the (locally compact) field of $p$-adic numbers. The 
discrete multiplicative group $\mathbb{Q}^\times$ of non-zero rationals 
acts on 
$\mathbb{Q}_p$ by multiplication. The semidirect product
\mbox{$L\hspace{-2pt} := \hspace{-2pt}
(\mathbb{Q}_p,+) \hspace{-2pt} \rtimes 
\hspace{-1.5pt} \mathbb{Q}^\times\hspace{-1pt}$}
is locally compact and zero-dimensional (and so, again, $o(L)$ is 
trivial), but $\mathbb{Q}_p$ is the smallest open normal subgroup of $L$,
and therefore  
\mbox{$o^*(L)\hspace{-2pt} = \hspace{-2pt} \mathbb{Q}_p$}.

\item
In general, let $G$ be a locally compact group and 
$D$ a subgroup of $\operatorname{Aut}(G)$ such that $G$ 
contains no proper $D$-invariant open subgroup, and put
\mbox{$L\hspace{-2pt} := \hspace{-2pt}
G \hspace{-2pt} \rtimes \hspace{-2pt} D$},  where $D$ is equipped 
with the  discrete  topology. Then, by Theorem~\ref{intro:thm:LC}, the 
locally  compact  group $L$ has the property that
\mbox{$o(L)\hspace{-2pt} = \hspace{-2pt} L_0
\hspace{-2pt} = \hspace{-2pt} G_0$} and
\mbox{$o^* (L) \hspace{-2pt} = \hspace{-2pt} G$}.

\end{myalphlist}
\end{examples}

\bigskip

\section*{Acknowledgements}

This work has emerged from the joint work of one of the authors with W. W. 
Comfort on locally precompact groups; the authors wish to express their 
heartfelt gratitude to Wis Comfort for the helpful discussions and 
correspondence. The authors wish to thank Dragomir Djokovic for the 
valuable correspondence. The authors are grateful to Karen Kipper for her 
kind help in proofreading this paper for grammar and punctuation.

{\footnotesize

\bibliography{notes,notes2,notes3}
}

\begin{samepage}

\bigskip
\noindent
\begin{tabular}{l @{\hspace{1.8cm}} l}
Department of Mathematics and Computer Science & Department of 
Mathematics\\
University of Udine & University of Manitoba\\
Via delle Scienze, 208 -- Loc. Rizzi, 33100 Udine
 & Winnipeg, Manitoba, R3T 2N2 \\
Italy & Canada \\ & \\
\em e-mail: dikranja@dimi.uniud.it  &
\em e-mail: lukacs@cc.umanitoba.ca
\end{tabular}

\end{samepage}

\end{document}